\newtheorem{thm}{Theorem}[section]
\newtheorem{prop}[thm]{Proposition}
\newtheorem{cor}[thm]{Corollary}
\newtheorem{lem}[thm]{Lemma}
\theoremstyle{remark}
\newtheorem{defi}[thm]{Definition}
\newtheorem{rem}[thm]{Remark}
\newcommand{\mbtx}{\tilde{\mb x} }
\newcommand{\tr}{\tilde r }
\newcommand{\tx}{\tilde x}
\newcommand{\al}{\alpha}
\newcommand{\Del}{\Delta}
\newcommand{\del}{\delta}
\newcommand{\lam}{\lambda}
\newcommand{\sig}{\sigma}
\newcommand{\pt}{\partial}
\newcommand{\floor}[1]{\left\lfloor #1 \right\rfloor}
\newcommand{\ceil}[1]{\left\lceil #1 \right\rceil}
\newcommand{\C}{\mathbb C}
\newcommand{\R}{\mathbb R}
\newcommand{\mb}{\mathbf}
\title[Convergence of spherical heat kernels]{An explicit formula of the limit of the heat kernel measures on the spheres embedded in $\R^\infty$}
\author{Minh-Luan Doan, Evan O'Dorney }
\date{August 2024}
\thanks{Special thanks to Professor Brian Hall (University of Notre Dame) for his valuable suggestions that lead to the great improvement of this project.}
\begin{document}
	\maketitle
	
	\begin{abstract}
		We show that the heat kernel measures based at the north pole of the spheres $S^{N-1}(\sqrt N)$, with a properly scaled radius $\sqrt N$ and adjusted center, converge to a Gaussian measure in $\R^\infty$. Moreover, we find an explicit formula for this limiting measure.
	\end{abstract}
	
	\section{Introduction and Statement of the Main Theorem}
	\subsection{Introduction}
	For any $a>0$, consider the $(N-1)$-sphere $S^{N-1}(a)$ of radius $a$, given by
	\begin{align*}
		S^{N-1}(a) = \{\mb x\in\R^N\colon |\mb x|^2 = x_1^2+x_2^2+\dots+x_N^2= a^2\}
	\end{align*}
	Let $d\sig^{N-1}_a$ denote the rotation-invariant normalized volume measure on $S^{N-1}(a)$. It is known that there is no probability measure on the infinite-dimensional sphere $S^\infty(a)$, %(one can find the claim as an exercise in many Real Analysis textbooks, for instance \cite{Fol95}), 
	so one should not expect meaningful convergence of $d\sig^{N-1}_a$ on $S^{N-1}(a)$ with fixed radius to any measure on $S^\infty(a)$. Instead, it is common practice to let $a$ vary with $N$, usually $a=\sqrt N$, and consider the sequence of spheres as embedded in $\R^\infty$. Here, we revisit one well-known classical result that comes from statistical mechanics  which describes the convergence of the spherical measures to the standard normal probability measure on $\R^\infty$ as follows. 
		
	The normal probability measure with mean $0$ and variance $t$ on $\R$ has the explicit formula 
	$$d\mu^1_t(x)={\mu^1_t(x)\,dx=(2\pi t)^{-1/2}e^{-{x^2}/{(2t)}}}.$$ 
	
	One can define $d\mu^k_t$  as the product of $k$ copies of $d\mu^1_t$, which has the explicit formula
	$$d\mu^k_t(x_1,\dots,x_k)=(2\pi t)^{-k/2}\exp\left\{-\dfrac{x_1^2+x_2^2+\dots+x_k^2}{2t}\right\}\,dx_1\,\dots\,dx_k$$ 
	
	Likewise, one can define $d \mu^\infty_t$ as the infinite product of the infinitely many copies of $d\mu^1_t$ (in the sense of the Kolmogorov Extention Theorem), which satisfies the compatibility property: if $f$ is a nice enough function of $k<\infty$ variables $x_1,\dots,x_k$ only, then
	\begin{align*}
		\int_{\R^\infty} f(\mb x)\,d\mu^\infty_t(\mb x) = \int_{\R^k} f(\mb y)\,d\mu^k_t(\mb y).
	\end{align*}
    
	The classical result in statistical mechanics first popularized by Boltzmann (\cite{Boltzmann}), later made rigorous by \cite{HiNo64} and \cite{UmKo65}, shows that the normalized spherical measures $d\sig^{N-1}$ on $S^{N-1}(\sqrt N)$ converge to the \emph{standard} Gaussian measure $d\mu^\infty_1$ (i.e., of variance $t=1$). There are many ways to describe this convergence phenomenon, but one of the simplest and explicit ways found in \cite[Thm.~2.1]{PeSen} is stated as follows.
	
	\begin{thm}\label{thm:limitsph} 
		For any \emph{polynomial} $f$ of fixed $k$ variables $x_1,\dots, x_k$, we have 
		\begin{align*}
			\lim_{N\to\infty} \int_{S^{N-1}(\sqrt N)} f(x_1,\dots,x_k)\,d\sig^{N-1}(\mb x)
			&= \int_{\R^k} f(x_1,\dots,x_k)\,d\mu^k_1 (\mb x).
		\end{align*}
	\end{thm}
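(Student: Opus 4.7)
The plan is to reduce to monomials by linearity and then evaluate both sides using the classical explicit formula for polynomial moments on a sphere, combined with a short Gamma-function asymptotic.

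By linearity in $f$, it suffices to treat a single monomial $f = x_1^{n_1}\cdots x_k^{n_k}$. If any $n_i$ is odd, the reflection $x_i\mapsto -x_i$ preserves both $d\sig^{N-1}$ and $d\mu^k_1$ but negates the integrand, so both sides vanish. Thus I may assume $n_i = 2m_i$ is even, and set $M = m_1+\cdots+m_k$. On the Gaussian side, independence of coordinates under $d\mu^k_1$ together with the one-dimensional moment $\int_\R x^{2m}\,d\mu^1_1 = (2m-1)!!$ gives the target value $\prod_{i=1}^k (2m_i-1)!!$.

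On the sphere side I would invoke the Dirichlet-type formula
$$\int_{S^{N-1}(1)} x_1^{2m_1}\cdots x_k^{2m_k}\, d\sig^{N-1}_1 = \frac{\Gamma(N/2)}{\Gamma(N/2+M)}\prod_{i=1}^k\frac{\Gamma(m_i+1/2)}{\Gamma(1/2)},$$
which follows in a single line from evaluating a Gaussian integral in polar coordinates. Rescaling from radius $1$ to radius $\sqrt N$ brings in a factor $N^M$, and the elementary finite-product identity
$$N^M\cdot\frac{\Gamma(N/2)}{\Gamma(N/2+M)} = \prod_{j=0}^{M-1}\frac{N}{N/2+j} \xrightarrow[N\to\infty]{} 2^M$$
yields the limit $\prod_{i=1}^k 2^{m_i}\Gamma(m_i+1/2)/\Gamma(1/2)$. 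The Legendre duplication identity $2^m\Gamma(m+1/2)/\Gamma(1/2) = (2m-1)!!$ shows that this equals the Gaussian value, completing the proof.

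The argument is essentially a classical bookkeeping exercise and presents no serious obstacle; the only delicate point is the Gamma asymptotic, which I would handle by the elementary finite product above rather than invoking Stirling or any dominated-convergence machinery. The polynomial hypothesis on $f$ is precisely what makes this direct moment-by-moment approach available: handling merely bounded continuous $f$ would instead require a tightness or density argument, which is unnecessary here.
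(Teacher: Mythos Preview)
Your proof is correct and is the standard moment-computation argument for this classical fact. The paper does not supply its own proof of Theorem~\ref{thm:limitsph}: it is stated there as a known result and attributed to \cite[Thm.~2.1]{PeSen} (with historical credit to Boltzmann, Hida--Nomoto, and Umemura--Kono), serving only as motivation for the main Theorem~\ref{thm:main}. So there is nothing to compare against; your Dirichlet-integral/Gamma-asymptotic route is exactly the kind of direct verification one finds in the cited sources.
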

    
	Looking to extend this well-known classical result, one can regard the spherical measure $d\sig^{N-1}$ as a special case of the heat kernel measure on $S^{N-1}(\sqrt N)$ at time $t=\infty$. 

    It is natural to ask whether this convergence extends beyond the static uniform measure to the full family of \emph{heat kernel measures} on the spheres. The heat kernel encodes the diffusion process generated by the Laplace--Beltrami operator, and its limiting behavior reflects convergence of both spectral data and stochastic dynamics. In infinite-dimensional flat Hilbert spaces, the existence of such heat kernels is problematic (see, for example, Driver \cite{Driver2003}). Nevertheless, in the context of high-dimensional spheres embedded in $\mathbb{R}^\infty$, one may expect a Gaussian limit.

    Previous work has addressed this question indirectly. Takatsu \cite{Takatsu2021} established the convergence of spectral structures (Laplace operators and heat semigroups) from high-dimensional spheres to Gaussian spaces. Umemura and Kono \cite{UmKo65}, Peterson \cite{PeSen} developed spectral and functional analytic aspects of the spherical-to-Gaussian limit. However, none of these results provides an explicit, kernel-level description of the convergence of the spherical heat kernels themselves.
    
    The goal of this paper is to extend Theorem \ref{thm:limitsph} to the case of heat kernel measures on the sequence of spheres of scaled radii $\sqrt N$ at any time $t>0$.
	\subsection{Notations}
	Consider $\R^k$ as a submanifold of $\R^n$ for $k<n$ in the usual way (a point $\mb x=(x_1,\dots,x_m)$ in $\R^m$ can be written as $\mb x=(x_1,\dots, x_m,0,\dots,0)$ in $\R^n$). For $t>0$ and a natural number $N$, define the function
	\begin{align*}
		m(t,N) = \sqrt N \,\exp\left\{\dfrac{t}{2}\left(-1+\dfrac{1}{N}\right)\right\},
	\end{align*}
	and the shifted (time-dependent) first coordinate $\tilde x_1 = \tilde x_1(t,N)=x_1+m(t,N)$. We also use the vector notations
	\begin{align*}
		\tilde {\mb x} = \tilde {\mb x} (t,N)= (\tilde x_1,x_2,\dots, x_N).
	\end{align*}
	
	\medskip
	
	For the sake of the main result, we consider the family of (shifted) spheres $\tilde S^{N-1}(\sqrt N)$ embedded in $\R^N$ defined as follows:
	\begin{align*}
		\tilde S^{N-1} (\sqrt N) = \{\mb x=(x_1,\dots,x_N)\in\R^N\colon \tilde x_1^2+x_2^2+\dots+x_N^2=N\}.
	\end{align*}
	%We will write %$\mbtm=(m(t,N),0,\dots,0)$, 
	%$\mb x_N'=(x_2,\dots, x_N),$ and $\mbtx=(\tilde x_{1,t},\mb x_N')$%=\mb x+\mbtm$.
	It is an $(N-1)$-dimensional sphere of radius $\sqrt N$ centered at $(-m(t,N),0,\dots,0)$.

	\medskip
	
	According to the general theory of heat equations \cite[Chapter 7]{Levy22}, there exists a heat kernel measure $\rho^{N-1}_t(\tilde {\mb x})$ at time $t$ on $\tilde S^{N-1}(\sqrt N)$ based at the north pole $\tilde {\mb p} = \tilde {\mb p} (t,N)=(\sqrt N-m(t,N), 0,\dots,0)$ of the sphere. 
	
	\subsection{Statement of the Main Theorem}
	For the rest of the paper, we will try to understand the convergence of the heat kernel measure on $\tilde S^{N-1}(\sqrt N)$ when $N\to\infty$ as described in the following theorem.
	\begin{thm}\label{thm:main}
		Fix $t>0$ and an integer $k$. Let $\mathcal P^{k}(\R)$ be the vector space of all polynomials of $k$ real variables $x_1, x_2,\dots, x_k$ (not $\tilde x_1$). Then, for any $f$ in $\mathcal P^{k}(\R)$ considered as a function defined on a neighborhood of $\tilde S^{N-1}(\sqrt N)$ embedded in $\R^N$ with $N>k$, we have
		\begin{align*}
			\lim_{N\to\infty} \int_{\tilde S^{N-1}(\sqrt N)}f(x_1,\dots,x_k)\,\rho^{N-1}_t(\mbtx)\,d\mbtx= \int_{\R^k} f(x_1,\dots,x_k)\,u^\infty_t(\mb x)\,d\mb x 
		\end{align*}
		where $u_t^\infty(\mb x)\,d\mb x$ is the infinite Gaussian measure on $\R^\infty$ whose restriction to $\R^k$ has the explicit formula
		\begin{align*}
			u^\infty_t (\mb x)\,d\mb x\Big|_{\R^k} = c_{t,k}\exp\left(-\dfrac{x_1^2}{2(1-e^{-t}-te^{-t})}-\dfrac{x_2^2+x_3^2+\dots+x_k^2}{2(1-e^{-t})}\right)dx_1\,\dots \,dx_k
		\end{align*}
		where $c_{t,k}$ is the normalizing constant
		\begin{align*}
			c_{t,k} =(2\pi)^{-k/2}(1-e^{-t}-te^{-t})^{-1/2}(1-e^{-t})^{(1-k)/2}.
		\end{align*}
	\end{thm}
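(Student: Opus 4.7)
My plan is to use the method of moments. Since Gaussian measures are determined by their moments and any polynomial is a finite linear combination of monomials, it suffices to prove that for each multi-index $\mb{a} = (a_1,\ldots,a_k) \in \mathbb{Z}_{\geq 0}^k$ the moment
\begin{align*}
M_N(\mb{a}) := \int_{\tilde S^{N-1}(\sqrt N)} x_1^{a_1}\cdots x_k^{a_k}\, \rho^{N-1}_t(\mbtx)\, d\mbtx
\end{align*}
converges as $N\to\infty$ to the corresponding moment of the Gaussian $u^\infty_t|_{\R^k}$. Substituting $x_1 = \tx_1 - m(t,N)$ and expanding binomially reduces this to computing, for each $b \geq 0$, integrals of the monomials $\tx_1^b x_2^{a_2}\cdots x_k^{a_k}$ against the heat kernel on the standard sphere $\tilde S^{N-1}(\sqrt N)$ (centered at the origin in $\tx$-coordinates), based at the north pole $\tilde{\mb p} = (\sqrt N, 0,\ldots,0)$.

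The main tool is the identity $\int f\, d\rho^{N-1}_t = (e^{t\Delta/2}f)(\tilde{\mb p})$, where $\Delta$ is the Laplace--Beltrami operator on $\tilde S^{N-1}(\sqrt N)$. I would decompose the restriction of each such polynomial $f$ to the sphere into spherical harmonics, $f = \sum_\ell Y_\ell$, where $Y_\ell$ is an $\R^N$-harmonic polynomial of degree $\ell$; this decomposition is possible modulo the relation $|\mbtx|^2 = N$. Since each $Y_\ell$ is a $\Delta$-eigenfunction with eigenvalue $-\ell(\ell+N-2)/N$,
\begin{align*}
(e^{t\Delta/2} f)(\tilde{\mb p}) = \sum_{\ell} e^{-\ell(\ell+N-2)t/(2N)}\, Y_\ell(\tilde{\mb p}).
\end{align*}
By rotational symmetry about the $\tx_1$-axis (preserved by both $\rho^{N-1}_t$ and $\tilde{\mb p}$), the zonal component in $\tx_1$ is a Gegenbauer polynomial in $\tx_1/\sqrt N$, and contributions from the transverse variables $x_2,\ldots,x_k$ decouple. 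Taking $N \to \infty$, the decay factor tends to $e^{-\ell t/2}$ for each fixed $\ell$ and the normalized Gegenbauer polynomials converge to Hermite polynomials, which should produce exactly the moments of the claimed Gaussian.

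As a sanity check on the variance structure, the $\R^N$-harmonic polynomial $\tx_1^2 - \tx_2^2$ has sphere-Laplacian eigenvalue $-2$ on $\tilde S^{N-1}(\sqrt N)$ and initial value $N$ at $\tilde{\mb p}$, so $\int (\tx_1^2 - \tx_2^2)\, d\rho^{N-1}_t = Ne^{-t}$; combined with $|\mbtx|^2 = N$ and symmetry in $\tx_2, \ldots, \tx_N$, this yields $\int \tx_2^2\, d\rho^{N-1}_t = 1 - e^{-t}$ and $\int \tx_1^2\, d\rho^{N-1}_t = Ne^{-t} + 1 - e^{-t}$. Expanding $m(t,N)^2 = Ne^{-t} + te^{-t} + O(1/N)$ then gives $\int x_1^2\, d\rho^{N-1}_t = 1 - e^{-t} - te^{-t} + O(1/N)$, matching the theorem. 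The main obstacles I anticipate are: (i) uniform control of the zonal (Gegenbauer) expansion of $\tx_1^b$ and its $N\to\infty$ asymptotics, so that the limit commutes with the finite harmonic sum at each fixed degree; and (ii) verifying the asymptotic factorization of mixed moments in $x_1,\ldots,x_k$, reflecting the emerging independence of these coordinates. Both should reduce to an $O(1/N)$-level bookkeeping of how the constraint $|\mbtx|^2 = N$ couples different harmonic degrees.
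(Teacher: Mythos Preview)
Your outline is correct and shares the paper's backbone: both start from $\tilde H^{N-1}_t f = (e^{t\Delta_S/2}f)(\tilde{\mb p})$, both diagonalize via an eigenpolynomial expansion, and both pass to the limit termwise after expanding in $1/N$. In particular, the eigenpolynomials $p_n(\tilde x)$ that the paper constructs for the one-variable operator $D$ are, up to normalization, exactly the Gegenbauer/ultraspherical polynomials you invoke for the zonal direction; your obstacle~(i)---controlling how the Gegenbauer coefficients and the values $p_n(\sqrt N)$ interact as $N\to\infty$---is precisely the content of Lemmas~\ref{EO_lem:PW}--\ref{EO_lem:powseries} and the combinatorial expansion that follows them. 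Your variance sanity check is also correct and matches the paper's answer.

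The substantive difference is in how the decoupling between $x_1$ and $(x_2,\dots,x_k)$ is obtained. You propose to extract it geometrically, via the zonal projection of the full spherical-harmonic decomposition of a mixed monomial (equivalently, integrating out the transverse variables over the sub-sphere $\{\tilde x_1=\text{const}\}$). The paper instead decouples at the level of the operator: it writes $\Delta_S = D + E - \tfrac{2}{N}\,\tilde x\partial_x\,\mb y\partial_{\mb y}$ on $\mathcal P^k_{\le\ell}(\R)$, notes that $D$ (acting on $x_1$ alone) and $E$ (acting on $x_2,\dots,x_k$ alone) commute, and treats the cross term as an $O(1/N)$ perturbation on a fixed finite-dimensional space, so that $e^{t\Delta_S/2}\to e^{tD/2}e^{tE/2}$. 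This dispatches your obstacle~(ii) in one stroke and avoids any mixed-variable harmonic bookkeeping. For the $E$-piece the paper then uses a Baker--Campbell--Hausdorff identity (Lemma~\ref{lem:BCH}) to recognize $\lim_N e^{tE/2}$ directly as the Mehler/Ornstein--Uhlenbeck kernel evaluated at the origin, yielding the Gaussian in $x_2,\dots,x_k$ without passing through a Gegenbauer-to-Hermite limit. Your route would arrive at the same place, but the operator split is shorter and makes the asymptotic independence of the coordinates structurally transparent rather than something to be verified by hand.
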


\section{A Heuristic Derivation Using PDE}

\subsection{The spherical Laplacian and the heat operator}
We now consider the explicit formula for the spherical Laplacian in real coordinates, which plays an important role in establishing Theorem \ref{thm:main}. Still considering the sphere centered at the origin $S^{N-1}(a)$, from the general theory of smooth manifolds, there exists a self-adjoint Laplacian $\Del_{S^{N-1}(a)}$ with respect to the normalized volume measure $\sig^{N-1}$. The \emph{heat kernel} $\rho^{N-1}_{a,t}(\mb y_0,\mb y)$ at a base point $\mb y_0$, time $t$, and a position $\mb y$ is the fundamental solution to the \emph{spherical heat equation}
\begin{align}
	\frac{\pt}{\pt t}\, K(t,\mb y_0,\mb y)=\frac{1}{2}\Del_{S^{N-1}(a)}\, K(t,\mb y_0,\mb y),\label{eqn:heat}
\end{align}
with the initial condition
\begin{align*}
	\lim_{t\downarrow 0} \int_{\mb y\in S^{N-1}(a)} \rho^{N-1}_{a,t}(\mb y_0,\mb y)\,f(\mb y)\,d\sig^{N-1}_a(\mb y)=f(\mb y_0),\qquad f\in C^\infty(S^{N-1}(a)).
\end{align*}

It is known that $\rho^{N-1}_{a,t}(\mb y_0,\mb y)$ is symmetric in $\mb y_0$ and $\mb y$ and only depends on the (spherical) distance between $\mb y_0$ and $\mb y$. 

\medskip

Although there is a formula of the Laplacian $\Delta_M$ in local coordinates on any smooth manifold $M$, for the problem we are considering, we will not use it. Rather, we will consider the sphere $S^{N-1}(a)$ as embedded in $\R^N$, and write $\Del_{S^{N-1}(a)}$ explicitly as differential operator in the variables $x_1,\dots,x_N$ as follows.

\begin{prop}[{\cite{PeSen}}]\label{prop:spherical-laplacian}
	The Laplacian on the sphere $S^{N-1}(a)$ is given by
	\begin{align*}
		\Del_{S^{N-1}(a)}
		%&=\frac{1}{a^2}\sum_{1\le k<l\le N} \left.\left(x_k\frac{\pt}{\pt x_l}-x_l\frac{\pt}{\pt x_k}\right)^2\right|_{S^{N-1}(a)}\\
		&= \frac{1}{a^2}\left(r^2\Del_{\R^N}-(N-2)(r\pt_r)_N-(r\pt_r)^2_N\right)\big|_{S^{N-1}(a)}. 
	\end{align*}
	where $r=\sqrt {x_1^2+x_2^2+\dots+x_N^2}$ is the radial variable which satisfies
	$$\displaystyle (r\pt_r)_N := r\dfrac{\pt}{\pt r}=x_1\dfrac{\pt}{\pt x_1}+x_2\dfrac{\pt}{\pt x_2}+\dots+x_N\dfrac{\pt}{\pt x_N}$$
	from the Cauchy--Euler formula. In particular, if $a=\sqrt N$, then 
	\begin{align*}
		\Del_{S^{N-1}(\sqrt N)} = \left.\left[\Del_{\R^N}-\left(1-\dfrac{2}{N}\right)(r\pt_r)_N - \dfrac{1}{N}(r\pt_r)_N^2\right]\right|_{S^{N-1}(\sqrt N)}
	\end{align*}
\end{prop}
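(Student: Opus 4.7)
The proof rests on two standard facts: the spherical decomposition of the Euclidean Laplacian, and the conformal scaling of the Laplace--Beltrami operator under homothety. First, I would recall that for a function defined on a neighborhood of $S^{N-1}(a)$ in $\R^N\setminus\{0\}$, the flat Laplacian splits as
\begin{align*}
\Del_{\R^N}=\pt_r^2+\frac{N-1}{r}\pt_r+\frac{1}{r^2}\Del_{S^{N-1}(1)},
\end{align*}
where $\Del_{S^{N-1}(1)}$ is the Laplace--Beltrami operator on the unit sphere. This is derived in the usual way by writing $\mb x=r\boldsymbol{\omega}$ with $\boldsymbol{\omega}\in S^{N-1}(1)$ and computing $\mathrm{div}\,\mathrm{grad}$ in the orthonormal moving frame $(\pt_r,\pt_{\omega_1},\dots,\pt_{\omega_{N-1}})$; the factor $r^{N-1}$ in the volume form produces the term $(N-1)/r\cdot \pt_r$.

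Next, because $S^{N-1}(a)$ is isometric to $S^{N-1}(1)$ after scaling lengths by $a$, the Laplace--Beltrami operator transforms by $\Del_{S^{N-1}(a)}=a^{-2}\Del_{S^{N-1}(1)}$. Solving the previous identity for $\Del_{S^{N-1}(1)}$ and restricting to $\{r=a\}$ gives
\begin{align*}
a^2\Del_{S^{N-1}(a)}=\bigl[r^2\Del_{\R^N}-r^2\pt_r^2-(N-1)\,r\pt_r\bigr]\big|_{S^{N-1}(a)}.
\end{align*}

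Now I convert the $r^2\pt_r^2$ term into the radial Euler operator $(r\pt_r)_N$. A direct computation gives $(r\pt_r)^2=r\pt_r(r\pt_r)=r\pt_r+r^2\pt_r^2$, so $r^2\pt_r^2=(r\pt_r)^2-r\pt_r$. Substituting and combining the $r\pt_r$ terms produces
\begin{align*}
a^2\Del_{S^{N-1}(a)}=\bigl[r^2\Del_{\R^N}-(N-2)(r\pt_r)_N-(r\pt_r)_N^2\bigr]\big|_{S^{N-1}(a)},
\end{align*}
which is the claimed identity. The final specialization to $a=\sqrt N$ is obtained by dividing by $N$ and noting that $r^2/N=1$ on $S^{N-1}(\sqrt N)$, so $r^2\Del_{\R^N}/N$ restricts to $\Del_{\R^N}$ there. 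The identification of $(r\pt_r)_N$ with $\sum_i x_i\pt_{x_i}$ is Euler's homogeneity relation, valid because $r^2=\sum_i x_i^2$ and hence $r\pt_r r^2=2r^2=\sum_i x_i\pt_{x_i}r^2$.

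\textbf{Expected difficulty.} There is no real obstacle here; the entire argument is a bookkeeping exercise once the polar decomposition of $\Del_{\R^N}$ is accepted. The only place where one must be slightly careful is that the operator $\Del_{\R^N}$ on the right-hand side acts on functions extended to a neighborhood of the sphere, whereas $\Del_{S^{N-1}(a)}$ depends only on the restriction; the formula is nonetheless well defined because the combination $r^2\Del_{\R^N}-(N-2)(r\pt_r)_N-(r\pt_r)_N^2$ kills any function that is constant along radial lines times a function vanishing on $S^{N-1}(a)$, so the right-hand side descends to an operator on the sphere.
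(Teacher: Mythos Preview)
Your argument is correct and is the standard derivation. Note, however, that the paper does not supply its own proof of this proposition: it is stated with a citation to \cite{PeSen} and then used as a black box, so there is no ``paper's proof'' to compare against. Your route via the polar decomposition $\Del_{\R^N}=\pt_r^2+\frac{N-1}{r}\pt_r+\frac{1}{r^2}\Del_{S^{N-1}(1)}$, the conformal scaling $\Del_{S^{N-1}(a)}=a^{-2}\Del_{S^{N-1}(1)}$, and the Euler-operator identity $r^2\pt_r^2=(r\pt_r)^2-r\pt_r$ is exactly how one proves this in the cited literature.

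One small wording issue in your ``expected difficulty'' paragraph: the well-definedness you want is that the operator, after restriction to $r=a$, annihilates any smooth function vanishing on $S^{N-1}(a)$ (not merely products of a radially constant function with one vanishing on the sphere). This follows immediately from your polar decomposition, since $\Del_{S^{N-1}(1)}$ involves only angular derivatives and hence depends only on the restriction $f(a,\cdot)$. You might tighten that sentence, but the mathematics is fine.
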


We should understand %both formulas 
the formula in Proposition \ref{prop:spherical-laplacian} as follows:  To compute the spherical Laplacian of $f\in C^2(S^{N-1}(a))$,  first, we extend a function $f$ on $S^{N-1}(a)$ to a smooth function defined on the neighborhood of the sphere in $\R^N$ and consider it as a function in $\R^N$, then we apply the differential operators given in the formula, and restrict the result back to the sphere. It is known that the result is independent of the choice of extension.

We consider polynomials in this study, because a polynomial $f$ as a smooth function with an explicit formula in $\R^N$, can be represented with the same formula when restricted to the sphere $S^{N-1}(a)\hookrightarrow \R^N$.

One important remark about Prop.~\ref{prop:maps-of-sph-lap} is the following result when we apply the Laplacian on polynomial of $k$ variables $x_1,\dots,x_k$ where $k<N$.

\begin{prop}[{\cite[Remark after Lem.~3.2]{Coifman1971}}]\label{prop:maps-of-sph-lap}
	For integers $k,N$ with $k<N$, and any polynomial $f$ depending only on variables $x_1,\dots,x_k$, there exists a \emph{unique} polynomial $g$ also depending only on $x_1,\dots,x_k$ such that 
	\begin{align*}
		\Del_{S^{N-1}(a)} f=g.
	\end{align*}
\end{prop}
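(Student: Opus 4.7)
The plan is to use directly the explicit Euclidean formula for $\Del_{S^{N-1}(a)}$ given in Proposition \ref{prop:spherical-laplacian}, and exploit the fact that the radial coefficient $r^2$ becomes the constant $a^2$ when restricted to the sphere.

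\textbf{Existence.} Let $f\in\R[x_1,\dots,x_k]$. Since $\pt f/\pt x_j = 0$ for $j>k$, every term in
\begin{align*}
\Del_{\R^N}f &= \sum_{j=1}^k \frac{\pt^2 f}{\pt x_j^2},\qquad
(r\pt_r)_N f = \sum_{j=1}^k x_j \frac{\pt f}{\pt x_j},\qquad (r\pt_r)_N^2 f
\end{align*}
is already a polynomial in $x_1,\dots,x_k$ alone. The only piece of the formula from Proposition \ref{prop:spherical-laplacian} that introduces the extra variables $x_{k+1},\dots,x_N$ is the factor $r^2$ in front of $\Del_{\R^N}f$. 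But $r^2=a^2$ identically on $S^{N-1}(a)$, so as a function on the sphere
\begin{align*}
\Del_{S^{N-1}(a)} f = \Del_{\R^N} f - \frac{N-2}{a^2}(r\pt_r)_N f - \frac{1}{a^2}(r\pt_r)_N^2 f,
\end{align*}
and the right-hand side is a polynomial $g\in\R[x_1,\dots,x_k]$. This gives existence (and in fact a formula for $g$).

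\textbf{Uniqueness.} Suppose $g_1,g_2\in\R[x_1,\dots,x_k]$ both represent $\Del_{S^{N-1}(a)}f$ as functions on $S^{N-1}(a)$. Then $h:=g_1-g_2$ is a polynomial in $x_1,\dots,x_k$ only, that vanishes at every point of the sphere. Because $k<N$, for every $(x_1,\dots,x_k)$ in the open ball $\{x_1^2+\cdots+x_k^2<a^2\}\subset\R^k$ we can choose $(x_{k+1},\dots,x_N)$ with $x_{k+1}^2+\cdots+x_N^2=a^2-(x_1^2+\cdots+x_k^2)$, which places the full point on $S^{N-1}(a)$. Hence $h$ vanishes on a nonempty open subset of $\R^k$, forcing $h\equiv 0$.

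I do not anticipate a real obstacle here: the only subtlety is resisting the temptation to treat $r^2$ as a polynomial identity after restriction (it is not), and instead interpreting the equality strictly as an equality of functions on the sphere, which is exactly what the ``restrict to $S^{N-1}(a)$'' in Proposition \ref{prop:spherical-laplacian} means. The openness argument for uniqueness uses only $k<N$, which is the hypothesis.
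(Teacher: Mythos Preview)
Your proof is correct. Note, however, that the paper does not actually supply a proof of this proposition: it is quoted from \cite{Coifman1971}, and the paper only uses the result. That said, your existence argument is exactly the computation the paper later carries out implicitly when defining $D_{N,k}$ in \eqref{eqn:restrict-shift-Laplacian} (with $a=\sqrt N$), so in spirit the approaches coincide. Your uniqueness argument---projecting the sphere onto the open ball in $\R^k$ and using that a polynomial vanishing on an open set is zero---is the natural one, and it makes transparent why the hypothesis $k<N$ is needed (the paper alludes to this necessity but defers to \cite{Doan2024}).
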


As a result, we can consider $\Del_{S^{N-1}(\sqrt N)}$ as a map from the space of all polynomials of $k$ real variables $x_1,\dots,x_k$, denoted by $\mathcal P^k(\R)$, to itself, as long as $k<N$. We also note that the result will not be true if $k=N$.

\medskip

Next, let us consider the following important linear functional.
\begin{defi}\label{defi:heat}
	Define the \emph{heat operator} $H^N_t\colon\C^\infty(S^{N-1}(\sqrt N)) \to \R$ by the formula:
	\begin{align*}
		H^{N-1}_t\,f = \int_{S^{N-1}(\sqrt N)} f(\mb x)\,\rho^{N-1}_t(\mb x)\,d\mb x.
	\end{align*} 
	In other words, $H^{N-1}_tf$ integrates the function $f$ over the heat kernel $\rho^{N-1}_t(\mb x)$ considered as functions defined in some neighborhood of $S^{N-1}(\sqrt N)$ in $\R^N$.
\end{defi} 

The following theorem from an unpublished thesis of Langlands \cite{Langlands} will be of our interest:

\begin{thm}\label{thm:powerseries}
	For any $t>0$, $k<N$, let $f$ be a polynomial of $x_1,\dots,x_k$ considered as a function on $S^{N-1}(\sqrt N)$, and $\mb p=(\sqrt N,0,\dots,0)$ be the north pole of the sphere. We have
	\begin{align*}
		H^{N-1}_tf=\left(e^{\frac{t}2\Del_{S^{N-1}(\sqrt N)}} f\right)(\mb p):=\left.\sum_{n=0}^\infty \frac{1}{n!}\left(\frac{t}{2}\Del_{S^{N-1}(\sqrt N)}\right)^n f\,\right|_{\mb x=\mb p},
	\end{align*}
	Here, the operator on the right-hand side is a power series of differential operators applies term-wise to $f$ and and evaluates the final result at the north pole $\mb p$.
\end{thm}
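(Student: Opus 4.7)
The plan is to reduce to a finite-dimensional linear algebra computation. Both sides of the claimed identity should be exhibited as solutions of the same heat-equation initial-value problem on a finite-dimensional space of polynomials, and then uniqueness forces them to agree.

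Let $\mb p = (\sqrt N, 0,\dots,0)$. Using the two-argument heat kernel $\rho^{N-1}_{\sqrt N,t}(\mb y_0,\mb x)$ from Section~2.1, I would define
$$u(t,\mb y_0) := \int_{S^{N-1}(\sqrt N)} f(\mb x)\, \rho^{N-1}_{\sqrt N,t}(\mb y_0,\mb x)\,d\sig^{N-1}(\mb x).$$
Symmetry of the kernel in its two spatial arguments gives $H^{N-1}_t f = u(t,\mb p)$, and standard heat-kernel theory then yields $\pt_t u = \tfrac{1}{2}\Del_{S^{N-1}(\sqrt N)} u$ on the sphere with initial condition $u(0,\cdot) = f$.

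By Proposition~\ref{prop:maps-of-sph-lap}, for $f\in\mathcal P^k(\R)$ of total degree $d$, the operator $\Del_{S^{N-1}(\sqrt N)}$ restricts to a linear endomorphism $L$ of the finite-dimensional space $\mathcal P^k_{\leq d}(\R)$ of polynomials in $x_1,\dots,x_k$ of degree at most $d$. That total degree is indeed preserved is immediate from the formula in Proposition~\ref{prop:spherical-laplacian}, since each of $r^2\Del_{\R^N}$, $(r\pt_r)_N$, and $(r\pt_r)^2_N$ is degree-preserving. Consequently the formal series $v(t):=\sum_{n=0}^\infty \tfrac{1}{n!}(\tfrac{t}{2} L)^n f$ is the usual matrix exponential $e^{tL/2} f$, which converges for all $t$ to an element of $\mathcal P^k_{\leq d}(\R)$, and by the standard differentiation theorem for matrix exponentials it satisfies $\pt_t v = \tfrac{1}{2}Lv$ with $v(0) = f$.

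Viewed as a function on the sphere, $v(t,\mb y_0)$ therefore solves the same initial-value problem as $u(t,\mb y_0)$. Uniqueness of smooth solutions to the heat equation on the compact manifold $S^{N-1}(\sqrt N)$ then forces $u = v$, and evaluation at $\mb y_0 = \mb p$ yields the theorem. I do not foresee a serious obstacle; the primary technical point will be bookkeeping around the fact that $\Del_{S^{N-1}(\sqrt N)}$ appears in two superficially different roles (as the generator of the PDE satisfied by $u$, and as a term-wise differential operator iterated on the polynomial $f$), a distinction that is already reconciled by Propositions~\ref{prop:spherical-laplacian} and~\ref{prop:maps-of-sph-lap}.
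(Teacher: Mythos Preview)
Your argument is correct and complete in outline. The paper itself does not supply a proof of this theorem; it attributes the result to an unpublished thesis of Langlands and simply invokes it. So there is no paper-proof to compare against at the level of technique.

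What you have written is the standard semigroup argument, and it goes through cleanly here precisely because of the finite-dimensionality furnished by Proposition~\ref{prop:maps-of-sph-lap}: the spherical Laplacian restricts to a linear endomorphism $L$ of $\mathcal P^k_{\le d}(\R)$, so $e^{tL/2}f$ is an honest matrix exponential and solves $\pt_t v = \tfrac12 Lv$, $v(0)=f$; meanwhile $u(t,\mb y_0)=\int f(\mb x)\rho^{N-1}_{\sqrt N,t}(\mb y_0,\mb x)\,d\sig^{N-1}(\mb x)$ solves the heat equation on the sphere with the same initial data (differentiating under the integral and using symmetry of the kernel, as you say). Uniqueness on the compact manifold then gives $u(t,\cdot)=v(t)|_{S^{N-1}(\sqrt N)}$ for all $t>0$, and evaluation at $\mb p$ finishes. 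The one point worth spelling out when you write this up is that iterating $L$ on polynomials really does compute iterated powers of the intrinsic $\Del_{S^{N-1}(\sqrt N)}$: this is exactly the content of the remark after Proposition~\ref{prop:spherical-laplacian} that the extension off the sphere is irrelevant, together with the uniqueness in Proposition~\ref{prop:maps-of-sph-lap}, so that $(L^n f)|_{S^{N-1}(\sqrt N)} = \Del_{S^{N-1}(\sqrt N)}^n (f|_{S^{N-1}(\sqrt N)})$ for every $n$.
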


\subsection{The mean of $x_1$ and the variable $\tilde x_1$}

To understand why we need to redefine the first coordinate from $x_1$ to $\tilde x=\tilde x_1 (t,N)$ in Theorem~\ref{thm:main}, we first see what would happen if we apply the heat operator $H^{N-1}_t$ corresponding to the heat kernel $\rho^{N-1}_t$ based at the north pole $\mb p$ of the non-shifted sphere $S^{N-1}(\sqrt N)$ (i.e., centered at the origin) on the monomial $f(\mb x)=x_1$.

\begin{prop}\label{prop:mean-x1}
	For $t>0$ and a natural number $N$, we have 
	\begin{align*}
		H^{N-1}_t\,x_1=m(t,N) := \sqrt N\,\exp\left\{\dfrac{t}{2}\left(-1+\dfrac{1}{N}\right)\right\}.
	\end{align*}
\end{prop}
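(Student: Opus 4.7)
The plan is to invoke Theorem \ref{thm:powerseries}, which reduces the integral on the left-hand side to
\begin{align*}
	H^{N-1}_t\, x_1 = \left(e^{\frac{t}{2}\Del_{S^{N-1}(\sqrt N)}} x_1\right)\!(\mb p),
\end{align*}
with $\mb p = (\sqrt N, 0, \dots, 0)$. The entire computation then hinges on showing that $x_1$ is an eigenfunction of $\Del_{S^{N-1}(\sqrt N)}$, which reduces the power series to a single scalar factor.

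To verify this, I would apply the explicit formula from Proposition \ref{prop:spherical-laplacian}. On the polynomial $f(\mb x) = x_1$, the three operators act very cleanly: $\Del_{\R^N} x_1 = 0$, and $(r\pt_r)_N x_1 = x_1$, so $(r\pt_r)_N^2 x_1 = x_1$ as well. Substituting into the formula for the case $a = \sqrt N$ gives
\begin{align*}
	\Del_{S^{N-1}(\sqrt N)}\, x_1 = 0 - \left(1 - \tfrac{2}{N}\right) x_1 - \tfrac{1}{N}\, x_1 = -\left(1 - \tfrac{1}{N}\right) x_1,
\end{align*}
so $x_1$ is an eigenfunction with eigenvalue $-(1 - 1/N)$. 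Note that the restriction to the sphere is automatic since this identity already holds globally on $\R^N$.

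Iterating this eigenvalue identity gives $\Del^n_{S^{N-1}(\sqrt N)}\, x_1 = (-(1 - 1/N))^n x_1$ for all $n \geq 0$, and the power series in Theorem \ref{thm:powerseries} collapses to the scalar
\begin{align*}
	e^{\frac{t}{2}\Del_{S^{N-1}(\sqrt N)}}\, x_1 = \exp\!\left\{\tfrac{t}{2}\left(-1 + \tfrac{1}{N}\right)\right\} x_1.
\end{align*}
Evaluating at $\mb p$ and using $x_1(\mb p) = \sqrt N$ yields exactly $m(t,N)$.

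There is no real obstacle here --- the work was done in Proposition \ref{prop:spherical-laplacian} and Theorem \ref{thm:powerseries}. The only small point worth checking carefully is that the extension of $f = x_1$ from the sphere to $\R^N$ used to define $\Del_{S^{N-1}(\sqrt N)} f$ gives an answer independent of the extension, which is guaranteed by the general theory cited after Proposition \ref{prop:spherical-laplacian} and which holds trivially here since $x_1$ is already given as a polynomial on all of $\R^N$.
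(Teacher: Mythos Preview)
Your proof is correct and follows essentially the same approach as the paper: both invoke Theorem~\ref{thm:powerseries} and Proposition~\ref{prop:spherical-laplacian}, verify that $x_1$ is an eigenfunction of $\Del_{S^{N-1}(\sqrt N)}$ with eigenvalue $-(1-1/N)$, sum the resulting geometric-type series in the exponential, and evaluate at the north pole. The only cosmetic difference is that you use the $a=\sqrt N$ form of the Laplacian directly, whereas the paper works from the general-$a$ form and divides by $N$; the computations are otherwise identical.
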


\begin{proof}
	We use the power series form of $H^{N-1}_tf$ in Theorem \ref{thm:powerseries} and the second formula of $\Del_{S^{N-1}(\sqrt N)}$ in Proposition \ref{prop:spherical-laplacian}. Notice that
	\begin{align*}
		(r\pt_r)_N = x_1\dfrac{\pt}{\pt x_1} x_1 = x_1,
	\end{align*}
	so
	\begin{align*}
		\Del_{S^{N-1}(\sqrt N)} x_1 &= 
		\frac{1}{N}\left(r^2\Del_{\R^N}-(r\pt_r)^2_N-(N-2)(r\pt_r)_N\right)\big|_{S^{N-1}(\sqrt N)} x_1\\
		& = \dfrac{1}{N}(-x_1-(N-2)x_1) = \left(-1+\dfrac{1}{N}\right)x_1.
	\end{align*}
	As a result,
	\begin{align*}
		\sum_{n=0}^\infty \frac{1}{n!}\left(\frac{t}{2}\Del_{S^{N-1}(\sqrt N)}\right)^n x_1
		&= \sum_{n=0}^\infty \dfrac{1}{n!}\left[\dfrac{t}{2}\left(-1+\dfrac{1}{N}\right)\right]^n x_1 = \exp\left\{\dfrac{t}{2}\left(-1+\dfrac{1}{N}\right)\right\}\, x_1.
	\end{align*}
	Hence, evaluating at the north pole ${\mb p=(\sqrt N,0,\dots, 0)}$ gives us
	\begin{align*}
		H^{N-1}_tx_1 =\left. \exp\left\{\dfrac{t}{2}\left(-1+\dfrac{1}{N}\right)\right\}\, x_1\right|_{x_1=\sqrt N} = m(t,N).
	\end{align*}
\end{proof}

Applying the same idea, we also have the following:
\begin{cor}
	For $t>0$, we have 
	\begin{align*}
		H^{N-1}_t\,x_k= 0.
	\end{align*}
	for $1<k\le N$.
\end{cor}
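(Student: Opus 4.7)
The plan is to mimic the proof of Proposition~\ref{prop:mean-x1} essentially verbatim. Since each coordinate $x_k$ (with $1 \le k \le N$) enters the expression for the spherical Laplacian from Proposition~\ref{prop:spherical-laplacian} symmetrically, the same computation applies: I would first note that $\Delta_{\R^N} x_k = 0$, that $(r\pt_r)_N x_k = x_k$, and hence $(r\pt_r)_N^2 x_k = x_k$, which yields
\begin{align*}
\Del_{S^{N-1}(\sqrt N)} x_k = \frac{1}{N}\bigl(-x_k - (N-2)x_k\bigr) = \left(-1+\frac{1}{N}\right) x_k.
\end{align*}

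Since $x_k$ is an eigenfunction of $\Del_{S^{N-1}(\sqrt N)}$ with the same eigenvalue as $x_1$, the power series expression in Theorem~\ref{thm:powerseries} collapses to
\begin{align*}
\sum_{n=0}^\infty \frac{1}{n!}\left(\frac{t}{2}\Del_{S^{N-1}(\sqrt N)}\right)^n x_k = \exp\left\{\frac{t}{2}\left(-1+\frac{1}{N}\right)\right\} x_k.
\end{align*}

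The only difference from the $k=1$ case comes at the final evaluation at the north pole $\mb p = (\sqrt N, 0, \dots, 0)$: for $k > 1$, the $k$-th coordinate of $\mb p$ is $0$, so $x_k|_{\mb p} = 0$, yielding $H^{N-1}_t x_k = 0$. There is no real obstacle here; this is a direct corollary of the symmetry of the Laplacian in the coordinates $x_1, \dots, x_N$ combined with the asymmetric choice of base point $\mb p$ concentrated on the $x_1$-axis. The content of the corollary is exactly the statement that the heat kernel based at the north pole has vanishing means in the transverse directions, which is intuitively clear from the rotational symmetry about the $x_1$-axis.
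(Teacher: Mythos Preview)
Your proposal is correct and follows exactly the approach the paper intends: the paper merely writes ``Applying the same idea, we also have the following'' before stating the corollary, and your computation---showing $x_k$ is an eigenfunction of $\Del_{S^{N-1}(\sqrt N)}$ with eigenvalue $-1+1/N$ and then evaluating at $\mb p$ where the $k$th coordinate vanishes---is precisely that idea carried out in full.
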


If integrating a function over a measure is like finding its mean with respect to that measure, then Proposition \ref{prop:mean-x1} suggests that as $N\to\infty$, then the mean $\langle x_1\rangle=m(t,N)$ also tends to infinity. It is a then natural to introduce the new variable $\tilde x_1=\tilde x_1(t,N)=x_1+m(t,N)$, so that (old) variable $x_1$ will have the mean $0$.

More specifically, with this change of variable, we can check that for the shifted sphere $\tilde S^{N-1}(\sqrt N)$, since
\begin{align*}
	\dfrac{\pt }{\pt \tilde x_1} = \dfrac{\pt}{\pt x_1},
\end{align*}
the Laplacian $\Delta_{\tilde S^{N-1}(\sqrt N)}$ can be computed using the formula
\begin{align}
	\Delta_{\tilde S^{N-1}(\sqrt N)} =\left.\left[\Del_{\R^N}-\left(1-\dfrac{2}{N}\right)(\tr\pt_{\tr})_N-\dfrac{1}{N}(\tr\pt_{\tr})^2_N\right]\right|_{ \tilde S^{N-1}(\sqrt N)}\label{eqn:shift-Laplacian}
\end{align}
with the new radial variable is $\tr = \sqrt{{\tilde x_1}^2+\dots+x_N^2}$ and the new Cauchy--Euler formula
\begin{align*}
	\tr\pt_{\tr}:=\tr \dfrac{\pt}{\pt \tr} = \tx_1\dfrac{\pt}{\pt \tx_1} + x_2\dfrac{\pt}{\pt x_2} + \dots + x_N\dfrac{\pt}{\pt x_N}
\end{align*}

Leeping in mind that by translation invariant property of the Lebesgue measure on $\R^N$, we have $d\mb x = d\mb \mbtx$, so we can define the heat operator $\tilde H^{N-1}_t$ similar to Definition \ref{defi:heat} as follows
\begin{align*}
	\tilde H^{N-1}_t\,f = \int_{\tilde S^{N-1}(\sqrt N)} f(\mb x)\,\rho^{N-1}_t(\mbtx)\,d\mb x.
\end{align*}
where $\rho^{N-1}_t(\mbtx)\,d\mb x$ is now the heat kernel measure on $\tilde S^{N-1}(\sqrt N)$.

Then we can replicate Proposition \ref{prop:mean-x1} to get $\tilde H^{N-1}_t \tx_1 = m(t,N)$ and check that $\tilde H^{N-1}_t 1=1$, so for $x_1$ (not $\tx_1$) we have
\begin{align*}
	\tilde H^{N-1}_t x_1 = \tilde H^{N-1}_t [\tx_1-m(t,N)] = m(t,N)-m(t,N)=0.
\end{align*}

\subsection{Action of the heat operators on polynomials and the limiting heat kernel $u^\infty_t$.}\label{sec:heat-poly}
In this section, we fix $t>0$ and a positive integer $k$. We consider $\R^k$ as a sub-manifold of $\R^\infty$ in the usual sense, that is, a point in $\R^k$ is a point in $\R^\infty$ with coordinate $x_j=0$ for $j>k$.

We recall that the Laplacian $\Delta_{\tilde S^{N-1}(\sqrt N)}$ of the shifted sphere $\tilde S^{N-1}(\sqrt N)$ can be computed using the formula
\begin{align*}
	\Delta_{\tilde S^{N-1}(\sqrt N)} =\left.\left[\Del_{\R^N}-\left(1-\dfrac{2}{N}\right)(\tr\pt_{\tr})_N-\dfrac{1}{N}(\tr\pt_{\tr})^2_N\right]\right|_{ \tilde S^{N-1}(\sqrt N)}
\end{align*}

\medskip

Now, it is \emph{not true} the heat kernel $\rho^{N-1}_t$ satisfies the following equation
\begin{align*}
	\dfrac{\pt}{\pt t}\rho^{N-1}_t(\mbtx) = \dfrac{1}{2}\Del_{\tilde S^{N-1}(\sqrt N)} \rho^{N-1}_t(\mbtx).
\end{align*}
because unlike equation \eqref{eqn:heat}, we are writing $\tilde {\mb x}$ as coordinates in $\R^N$, and the Laplacian $\Del_{\tilde S^{N-1}(\sqrt N)}$ is \emph{not} self-adjoint with respect to the Lebesgue measure $ d\mb x$. 

It is true, fortunately, that Theorem \ref{thm:powerseries} holds for $\tilde S^{N-1}(\sqrt N)$ and $\rho^{N-1}_t(\mbtx)\,d\mb x$, with the evaluation at the north pole being  $\tilde {\mb p} = \tilde {\mb p} (t,N)=(\sqrt N-m(t,N), 0,\dots,0)$ instead of $(\sqrt N,0,\dots,0)$. 

\medskip

Consider $f$ a polynomial depending only on variables $x_1,x_2,\dots,x_k$ where $k<N$. In the standard coordinates $\mb x$ of $\R^N$, we have
\begin{align}\label{eqn:shift-Langlands}
	\int_{\tilde S^{N-1}(\sqrt N)} f(\mb x)\,\rho^{N-1}_t(\mbtx)\,d\mb x = \left(e^{\frac{t}2\Del_{\tilde S^{N-1}(\sqrt N)}} f\right)(\tilde {\mb p})
\end{align}

\medskip

From Theorem \ref{thm:powerseries}, with $N>k$ and $\tilde S^{N-1}(\sqrt N)$ replacing $\tilde S^{N-1}$, we see that
\begin{align*}
	\tilde H^{N-1}_t f = e^{\frac{t}2\Del_{\tilde S^{N-1}(\sqrt N)}} f  = \sum_{n=0}^\infty \dfrac{1}{n!}\left(\dfrac{t}{2} \Delta_{\tilde S^{N-1}(\sqrt N)}\right)^n f
\end{align*}
which involves a power series of the Laplacian $\Delta_{\tilde S^{N-1}(\sqrt N)}$. As seen from \eqref{eqn:shift-Laplacian}, the formula of the Laplacian involves the Laplacian $\Del_{\R^N}$ on $\R^N$ and the Cauchy--Euler operator $\tr\pt_{\tr}$. We note that the restriction of $f$ to $\tilde S^{N-1}$ is unique (this is not true when $k=N$, see \cite{Doan2024}), so we don't need to include the notation of restriction to $\tilde S^{N-1}(\sqrt N)$ if we assume the underlying assumption that the action is on polynomials of fixed number of variables. Hence, we can drop the notation of restriction back to $S^{N-1}(\sqrt N)$ in the formula of $\Del_{\tilde S^{N-1}{\sqrt N}}$. Also, since $f$ depends only on $x_1,\dots,x_k$ with $k<N$, the differential operators $x_i\frac{\pt}{\pt x_j}$ and $\frac{\pt}{\pt x_j}$ with $j>k$ do not have any effect on $f$, so we can write 
\begin{align}
	\Del_{\tilde S^{N-1}(\sqrt N)} f &= \left[\left(\frac{\pt^2}{\pt \tx_1^2}+\frac{\pt^2}{\pt x_2^2}+\dots+\frac{\pt^2}{\pt x_k^2}\right)\right.\notag\\
	&+\left(1-\dfrac{2}{N}\right)\left(\tx_1\frac{\pt}{\pt \tx_1}+x_2\frac{\pt}{\pt x_2}+\dots+x_k\frac{\pt}{\pt x_k}\right)\notag\\
	&+\left.\dfrac{1}{N}\left(\tx_1\frac{\pt}{\pt \tx_1}+x_2\frac{\pt}{\pt x_2}+\dots+x_k\frac{\pt}{\pt x_k}\right)^2\right] f=:D_{N,k}f\label{eqn:restrict-shift-Laplacian}
\end{align}

%===========================================================
\medskip

Formally, we have the Dirac-delta function $\delta(\mb x)$ that satisfies 
$$\int_{\R^N}f(\mb x)\,\del(\mb x-\mb x_0)\,d\mb x=f(\mb x_0)$$ 
then Equation \eqref{eqn:shift-Langlands} implies
\begin{align*}
	\int_{\tilde S^{N-1}(\sqrt N)} f(\mb x)\,\rho^{N-1}_t(\mbtx)\,d\mb x = \int_{\R^N}\left(e^{\frac{t}2\Del_{\tilde S^{N-1}(\sqrt N)}} f\right)(\mb x)\;\del(\mb x-\tilde {\mb p})\,d\mb x
\end{align*}

Since $f$ depends only on $x_1,\dots, x_k$, we have 
\begin{align*}
	\int_{\tilde S^{N-1}(\sqrt N)} &f(\mb x)\,\rho^{N-1}_t(\mbtx)\,d\mb x\\ 
	&= \int_{\R^k}\left(e^{\frac{t}2D_{N,k}} f\right)(x_1,\dots,x_k)\;\del\left(\tilde x_1-\sqrt N\right)\del(x_2)\dots\del(x_k)\,dx_1\dots,dx_k
\end{align*}

Taking the adjoint of $e^{\frac{t}2D_{N,k}}$ with respect to $d\mb x$ we obtain
\begin{align}
	\int_{\tilde S^{N-1}(\sqrt N)} &f(\mb x)\,\rho^{N-1}(t,\mbtx)\,d\mb x\notag\\ 
	&= \int_{\R^k} f(x_1,\dots,x_k)\;\left\{e^{\frac{t}2D_{N,k}^*}\del(\tilde x_1-\sqrt N)\del(x_2)\dots\del(x_k)\right\}\,dx_1\dots,dx_k\label{eqn:induced-eqn}
\end{align}

Let $\rho^{(N-1),\,k}_t(\tx_1,x_2,\dots,x_k)=\left\{e^{\frac{t}{2}D_{N,k}^*}\del\left(\tilde x_1-\sqrt N\right)\del(x_2)\dots\del(x_k)\right\}$.

Equation~\eqref{eqn:induced-eqn} suggests that when integrating with respect to $d\rho^{N-1}_t$ a polynomial in $\mathcal P^k(\R)$, the space of polynomials with $k<N$ real variables, we have 
\begin{align*}
 	\rho^{N-1}_t(\tilde{\mb x})=\rho^{(N-1),\,k}_t(\tx_1,x_2\dots,x_k)
\end{align*}

The restricted kernel $\rho^{(N-1),\,k}_t$ satisfies the differential equation:
\begin{align}
	\dfrac{d}{d t} \rho^{(N-1),\,k}_t(\tx_1,x_2,\dots,x_k) = \dfrac{1}{2}{D^{n,k}}^*\rho^{(N-1),\,k}_t(\tx_1,x_2,\dots,x_k)\label{eqn:restricted-heat}
\end{align}
With the initial condition at $t=0$
$$\rho^{(N-1),\,k}_0(\tx_1,x_2,\dots,x_k)=\del\left(\tx_1-\sqrt N\right)\del(x_2)\dots\del(x_k).$$
\medskip

Now, we write everything in the standard $d\mb x$ coordinates instead of $d\mbtx$ by defining 
\begin{align*}
	\nu^{(N-1),\,k}_t(x_1,x_2,\dots,x_k) = \rho^{(N-1),\,k}_t(\tx_1,x_2,\dots,x_k)
\end{align*}
We recall that $x_1=\tilde x_1 - m(t,N)$, so at $t=0$,  we have 
$$\nu^{(N-1),\,k}_0(x_1,\dots,x_k)= \rho^{(N-1),\,k}_0(x_1+m(0,N),x_2,\dots,x_k) = \del(x_1)\dots\del(x_k)$$
Equation~\eqref{eqn:restricted-heat} can now be written as
\begin{align}
	\dfrac{\pt}{\pt t}\nu^{(N-1),\,k}_t(x_1,\dots,x_k) &= \dfrac{\pt \rho^{(N-1),\,k}_t}{\pt \tx_1} \, \dfrac{d\tx_1}{dt} + \dfrac{d}{d t} \rho^{(N-1),\,k}(\tx_1,x_2,\dots,x_k)\notag\\
	& = \dfrac{\pt \nu^{(N-1),\,k}_t}{\pt x_1}\,\dfrac{d m(t,N)}{dt} + \dfrac{1}{2}D_{N,k}^*\nu^{(N-1),\,k}_t(x_1,\dots,x_k)\notag\\
	%%%
	\dfrac{\pt}{\pt t}\nu^{(N-1),\,k}_t(x_1,\dots,x_k) & = \dfrac{1}{2}\left[m(t,N)\left(-1+\dfrac{1}{N}\right)\dfrac{\pt }{\pt x_1} + D_{N,k}^*\right] \nu^{(N-1),\,k}_t(x_1,\dots,x_k)\label{eqn:adj-kernel}
\end{align}

\medskip

With respect to $d\mb x$, $x_j^*=x_j$ and $\frac{\pt}{\pt x_j}^*=-\frac{\pt}{\pt x_j}$
we have $\left(x_j\frac{\pt}{\pt x_j}\right)^*=-\frac{\pt}{\pt x_j} x_j=-x_j\frac{\pt}{\pt x_j}-1$ (here, $1$ means the identity operator), so
\begin{align*}
	\left(\tx\dfrac{\pt}{\pt\tx}\right)^* + \sum_{j=2}^k \left(x_j\dfrac{\pt}{\pt x_j}\right)^* = -k - m(t,N)\frac{\pt}{\pt x_1} - \sum_{j=1}^k x_j\frac{\pt}{\pt x_j}
\end{align*}
and 
\begin{align*}
	&\left\{\left(\tx\dfrac{\pt}{\pt\tx}+ \sum_{j=2}^k x_j\dfrac{\pt}{\pt x_j}\right)^2\right\}^* 
	= \left(-k - m(t,N)\frac{\pt}{\pt x_1} - \sum_{j=1}^k x_j\frac{\pt}{\pt x_j}\right)^2\\
	&= k^2 + m(t,N)^2 \frac{\pt^2}{\pt x_1^2} +\left(\sum_{j=1}^k x_j\frac{\pt}{\pt x_j}\right)^2 \\
	&\quad+ 2km(t,N)\frac{\pt}{\pt x_1} + 2k\sum_{j=1}^k x_j\frac{\pt}{\pt x_j} 
	+ m(t,N)\frac{\pt}{\pt x_1}\left(\sum_{j=1}^k x_j\frac{\pt}{\pt x_j}\right) + m(t,N)\left(\sum_{j=1}^k x_j\frac{\pt}{\pt x_j}\right)\frac{\pt}{\pt x_1}
\end{align*}
These identities give
\begin{align*}
	&m(t,N)\left(-1+\dfrac{1}{N}\right)\dfrac{\pt}{\pt x_1}+D_{N,k}^*\\ 
	&= \dfrac{1}{2}m(t,N)\left(-1+\dfrac{1}{N}\right)\dfrac{\pt}{\pt x_1}+\left[\dfrac{\pt^2}{\pt \tx_1^2}+\dfrac{\pt^2}{\pt x_2^2}+\dots+\dfrac{\pt^2}{\pt x_k^2}\right]^*\\
	&\quad -\left(1-\dfrac{2}{N}\right)\left(\tx\dfrac{\pt}{\pt\tx}+ \dfrac{1}{N}\sum_{j=2}^N x_j\dfrac{\pt}{\pt x_j}\right)^* -{\dfrac{1}{N}\left(\tx\dfrac{\pt}{\pt\tx}+ \dfrac{1}{N}\sum_{j=2}^N x_j\dfrac{\pt}{\pt x_j}\right)^*}^2\\
	&= \dfrac{1}{2}m(t,N)\left(-1+\dfrac{1}{N}\right)\dfrac{\pt}{\pt x_1}+\left[\dfrac{\pt^2}{\pt \tx_1^2}+\dfrac{\pt^2}{\pt x_2^2}+\dots+\dfrac{\pt^2}{\pt x_k^2}\right]\\
	&\quad +\left(1-\dfrac{2}{N}\right)\left(k+m(t,N)\frac{\pt}{\pt x_1} + \sum_{j=1}^k x_j\frac{\pt}{\pt x_j}\right) 
	-\dfrac{1}{N}\left(k+m(t,N)\frac{\pt}{\pt x_1} + \sum_{j=1}^k x_j\frac{\pt}{\pt x_j}\right)^2\\
	&= \left(1-\dfrac{m(t,N)^2}{N}\right)\dfrac{\pt^2}{\pt x_1^2}+\dfrac{\pt^2}{\pt x_2^2}+\dots+\dfrac{\pt^2}{\pt x_k^2} + \sum_{j=1}^k x_j\dfrac{\pt}{\pt x_j} + k -\dfrac{1}{N}d_{N,k} 
\end{align*}
where 
\begin{align*}
	&d_{N,k} = 2k+k^2+2m(t,N)\dfrac{\pt}{\pt x_1} + \left(\sum_{j=2}^kx_j\dfrac{\pt}{\pt x_j}\right)^2 +\\
	&\quad+ 2k\,m(t,N)\frac{\pt}{\pt x_1} + 2k\sum_{j=1}^k x_j\frac{\pt}{\pt x_j} 
	+ m(t,N)\frac{\pt}{\pt x_1}\left(\sum_{j=1}^k x_j\frac{\pt}{\pt x_j}\right) + m(t,N)\left(\sum_{j=1}^k x_j\frac{\pt}{\pt x_j}\right)\frac{\pt}{\pt x_1}
\end{align*}

Now, as $\lim\limits_{N\to\infty}\dfrac{m(t,N)^2}{N}=e^{-t}$ and $\lim\limits_{N\to\infty}\dfrac{m(t,N)}{N}=0$, the operator 		
\begin{align*}
	m(t,N)\left(-1+\dfrac{1}{N}\right)\dfrac{\pt}{\pt x_1}+D_{N,k}^*	
\end{align*}
converges pointwise on $\mathcal P_k(\R)$ when $N$ is large to the following operator:
\begin{align*}
	\left(1-e^{-t}\right)\dfrac{\pt^2}{\pt x_1^2}+\dfrac{\pt^2}{\pt x_2^2}+\dots+\dfrac{\pt^2}{\pt x_k^2} + \sum_{j=1}^k x_j\dfrac{\pt}{\pt x_j}+k.
\end{align*}

If we let $u^{\infty,k}_t$ be the limiting kernel of $\nu^{(N-1),\,k}_t$ by formally letting $N\to \infty$, from Equation~\eqref{eqn:adj-kernel}, the function $u^{\infty,k}_t$ satisfies the differential equation 
\begin{align}
	\dfrac{\pt}{\pt t} u^{\infty,k}_t = \dfrac{1}{2}\left\{\left(1-e^{-t}\right)\dfrac{\pt^2}{\pt x_1^2}+\dfrac{\pt^2}{\pt x_2^2}+\dots+\dfrac{\pt^2}{\pt x_k^2} + \sum_{j=1}^k x_j\dfrac{\pt}{\pt x_j}+k\right\} u^{\infty,k}_t	
	\label{eqn:limiting-kernel}
\end{align}
with the initial condition $u^{\infty,k}_0 = \del(x_1)\dots\del(x_k)$.

The (unique) solution $u^{\infty,k}_t$ to Equation~\eqref{eqn:limiting-kernel}, whose derivation can be found in Appendix~\ref{apd:pde} is
\begin{align*}
	u^{\infty,k}_t (x_1,\dots,x_k) &= c_{t,k} \exp\left(-\dfrac{x_1^2}{2(1-e^{-t}-te^{-t})}-\dfrac{x_2^2+x_3^2+\dots+x_k^2}{2(1-e^{-t})}\right)
\end{align*}
where $c_{t,k}=(2\pi)^{-k/2}(1-e^{-t}-te^{-t})^{-1/2}(1-e^{-t})^{(1-k)/2}$.

The measure $u^{\infty,k}_t(dx_1\dots dx_k)$ satisfies the following compatibility condition: for any integer $0<m\le k$, considering $R^m$ as a submanifold of $\R^k$ in the usual sense (i.e., $x_{m+1}=\dots=x_k=0$), then $\left.u^{\infty,k}_t\right|_{\R^m}=u^{\infty,m}_t$. 

Thus, by the Kolmogorov Extension Theorem (\cite[Sect.~3.5]{Bog2007}) there exists a probability measure on $\R^k$ infinite measure $u^{\infty}_t$ whose restriction on $\R^k$ is $u^{\infty,k}_t$, as described in the Main Theorem~\ref{thm:main}.

\begin{rem}\hfill
	\begin{enumerate}
		\item Although the argument presented above is not a proof, since one needs to clarifies the subtlety in distribution theory with the introduction of the Dirac-delta functions and the adjoints of unbounded operators $x_j$ and $\frac{\pt}{\pt x_j}$, it does present a natural way to derive the formula of the kernel in the Main Theorem. Since we are only interested in the action of the heat operators $\tilde H^{N-1}_t$ on polynomials, i.e., integrating polynomials over the heat kernels on $\tilde S^{N-1}(\sqrt N)$, there is a more rigorous proof using the Baker--Campbell--Hausdorff formula and explicit combinatorial calculation in the next section.
		\item The assumption that $f$ depends on finitely many variables in \eqref{eqn:shift-Langlands} is crucial, without which the adjoint operator of $\Del_{S^{N-1}(\sqrt N)}$ will not have a meaningful large-$N$ limit. Therefore, the conclusion of Theorem~\ref{thm:main} that the heat kernels on the spheres $S^{N-1}(\sqrt N)$ converge to an infinite Gaussian measure $u^\infty_t$ should be understood correctly as the restriction of the limiting kernel on any $\R^k$ is probability Gaussian measure.
	\end{enumerate}
\end{rem}

\section{A proof of Theorem~\ref{thm:main}}

The goal of this section is to provide a rigorous proof for the Main Theorem~\ref{thm:main}.

We recall that the action of the heat operator $\tilde H^{N-1}_t$ on a polynomial $f\in\mathcal P^k(\R)$ can be expressed as:
\begin{align*}
	\tilde H^{N-1}_t f = \left.\sum_{n=0}^\infty \dfrac{1}{n!}\left(\dfrac{t}{2}\Del_{\tilde S^{N-1}(\sqrt N)}\right)^nf \right|_{\mbtx=\tilde {\mb p}}.
\end{align*}
where the operator on the right-hand side is a power series applied to $f$ term-wise. 

Let us now fix a $t>0$ and a positive integer $k$. We recall that $\mathcal P^k(\R)$ denotes the vector space of all polynomials in the real variables $x_1,\dots,x_k$. Let $\mathcal P^k_{\le \ell}(\R)$ denote the space of all polynomials in $\mathcal P^k(\R)$ whose degrees are at most $\ell$. Then,
\begin{align*}
	\mathcal P^{k}(\R) = \bigcup_{\ell=0}^\infty \mathcal P^{k}_{\le \ell}(\R).
\end{align*}

The action of the operator $\Del_{\tilde S^{N-1}(\sqrt N)}$ on $f$, with $N>k$ can be further expressed as
\begin{align*}
	&\Del_{\tilde S^{N-1}(\sqrt N)} f = D^{N,k}f 
	\\&= \left[\sum_{j=1}^k \dfrac{\pt^2}{\pt x_j^2}-\left(1-\dfrac{2}{N}\right)\left(\tx_1\dfrac{\pt}{\pt x_1}+\sum_{j=2}^k x_j\dfrac{\pt}{\pt x_j}\right)-\dfrac{1}{N}\left(\tx_1\dfrac{\pt}{\pt x_1}+\sum_{j=2}^k x_j\dfrac{\pt}{\pt x_j}\right)^2\right]f
\end{align*}

Since $\Del_{\tilde S^{N-1}(\sqrt N)}f$ does not increase the degree or introduce a new variable when acting on $f\in\mathcal{P}^k_{\le \ell}(\R)$, we see that $\Del_{S^{N-1}(\sqrt N)}$ is a self-map on each finite-dimensional space $\mathcal P^k_{\le \ell}(\R)$.

\medskip

In this section, let us simplify the notation for convenience: we simply write $m$ for $m(t,N)$, write $x$ and $\tilde x$ for $x_1$ and $\tilde x_1$, respectively, and $\mb y=(x_2,\dots,x_k)$, as well as the multi-index notations
\begin{align*}
	\mb y^\al &= x_2^{n_2}x_3^{n_3}\dots x_k^{n_k}\\ 
	|\al|&= n_2+n_3+\dots+n_k.
\end{align*}
We also use $\pt_x$ for $\dfrac{\pt}{\pt x}$, and the Cauchy--Euler operator
\begin{align*}
	\mb y\pt_{\mb y}:= x_2\pt_{x_2}+\dots+x_k\pt_{x_k}
\end{align*}
We also write $\Del_S$ instead of $\Del_{\tilde S^{N-1}(\sqrt N)}$, keeping in mind the dependence on $N$ for the operator, and write $\Del=\sum_{j=1}^k \pt_{x_j}^2$,

Let us introduce the following operators:
\begin{align}
	D&=\dfrac{\pt^2}{\pt x_1^2}-\left(1-\dfrac{2}{N}\right)\tx_1\dfrac{\pt}{\pt x_1}-\dfrac{1}{N}\left(\tx_1\dfrac{\pt}{\pt x_1}\right)^2\label{eqn:op-D}\\
	& = \pt_{x}^2 - \left(1-\dfrac{2}{N}\right) \tx \pt_x - \dfrac{(\tx \pt_x)^2}{N}\notag
\end{align}
and \begin{align}
	E=E^{N,k}
	&=\sum_{j=2}^k\dfrac{\pt^2}{\pt x_j^2}-\left(1-\dfrac{2}{N}\right)\sum_{j=2}^kx_j\dfrac{\pt}{\pt x_j}-\dfrac{1}{N}\left(\sum_{j=2}^kx_j\dfrac{\pt}{\pt x_j}\right)^2\label{eqn:op-E}\\
	&=\sum_{j=2}^k\pt _{x_j}^2-\left(1-\dfrac{2}{N}\right)\mb y\pt_{\mb y}-\dfrac{(\mb y\pt_{\mb y})^2}{N} 
\end{align}
(Note that $D$ is independent of $k$ while $E$ depends on $k$)

Then, we can easily check that on $\mathcal P^k_{\le \ell}(\R)$,
\begin{align*}	
	\Del_S &= \Del - \left(1-\dfrac{2}{N}\right)(\tilde x\pt_x+\mb y\pt_{\mb y}) -\frac{1}{N} (\tilde x\pt_x+\mb y\pt_{\mb y})^2\\ 
	&= D + E - \dfrac{2}{N}\,x\pt_x\,\mb y\pt_{\mb y}.  
\end{align*}

We aim to establish the following results:

\begin{thm}\label{thm:x2toxk}
	For any monomial $f(x_2,\dots,x_k)=\mb y^\al=x_2^{n_2}\dots x_k^{n_k}$,
	\begin{align*}
		\lim_{N\to\infty}&\tilde H^{N-1}_t\, f =\lim_{N\to\infty}e^{tE^{N,k}/2}\, f\\
		&=\left[2\pi t(1-e^{-t})\right]^{\frac{1-k}{2}}\int_{\R^{k-1}} f(\mb y)\,\exp\left(-\dfrac{|\mb y|^2}{2(1-e^{-t})}\right)\,d\mb y
	\end{align*}
	
	Here, $|\mb y|^2=x_2^2+\dots+x_k^2$. In particular, for any $j$ where $2\le j\le k$,
	\begin{align*}
		\lim_{N\to\infty}\tilde H^{N-1}_t \left(x_j^{n}\right) = 
		\begin{cases}
			\dfrac{n!}{2^{n/2}(n/2)!}(1-e^{-t})^{n/2},\quad&\text{$n$ even}\\
			0,\quad&\text{$n$ odd}\\
		\end{cases}
	\end{align*}
\end{thm}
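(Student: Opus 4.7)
The first step exploits the observation that since $f(\mathbf{y})=\mathbf{y}^\alpha$ depends only on $x_2,\ldots,x_k$ and not on $\tilde x_1$, the Laplacian $\Del_{\tilde S^{N-1}(\sqrt N)}$ acts on $f$ exactly as $E^{N,k}$ does. Indeed, in the decomposition
\[
\Del_S = D + E^{N,k} - \tfrac{2}{N}\,\tilde x\pt_x\cdot \mathbf{y}\pt_{\mathbf{y}},
\]
both $D$ (which contains only $\pt_x$ and $\tilde x\pt_x$) and the cross term annihilate $f$: the first factor $\tilde x\pt_x$ of the cross term kills the $\mathbf{y}$-polynomial $\mathbf{y}\pt_{\mathbf{y}} f$. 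Since $E^{N,k} f$ is again a polynomial in $\mathbf{y}$ alone, iteration gives $\Del_S^n f = (E^{N,k})^n f$ for every $n\ge 0$. Applying Theorem~\ref{thm:powerseries} on $\tilde S^{N-1}(\sqrt N)$ (the $\mathbf{y}$-component of $\tilde {\mb p}$ is $0$) yields the first equality
\[
\tilde H^{N-1}_t f = \bigl(e^{tE^{N,k}/2}f\bigr)(\mathbf{y}=0),
\]
which reduces the entire problem to a statement about the operator $E^{N,k}$.

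Next, restrict $E^{N,k}$ to the finite-dimensional subspace $V_\ell$ of polynomials in $\mathbf{y}$ of total degree at most $\ell=|\al|$; this subspace is preserved because $\sum_{j=2}^{k}\pt_{x_j}^2$ strictly lowers degree by $2$ and $\mathbf{y}\pt_{\mathbf{y}}$ preserves degree. In the monomial basis, the contribution of $\sum_{j=2}^k \pt_{x_j}^2$ is $N$-independent, while $\mathbf{y}\pt_{\mathbf{y}}$ is diagonal with eigenvalue $|\be|$ on $\mathbf{y}^\be$, so the $N$-dependent diagonal entries $-(1-2/N)|\be|-(1/N)|\be|^2$ converge to $-|\be|$. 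Hence $E^{N,k}\to \tilde E := \sum_{j=2}^k \pt_{x_j}^2 - \mathbf{y}\pt_{\mathbf{y}}$ entrywise as matrices on $V_\ell$, and by continuity of the matrix exponential on a finite-dimensional vector space, $e^{tE^{N,k}/2}f \to e^{t\tilde E/2}f$ coefficient-wise. In particular the value at $\mathbf{y}=0$ converges.

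The final step identifies $\tilde E$ as (twice) the generator of the Ornstein--Uhlenbeck process on $\R^{k-1}$ with stationary variance $1$. By Mehler's formula,
\[
\bigl(e^{t\tilde E/2}f\bigr)(\mathbf{y}) = \int_{\R^{k-1}} f\bigl(e^{-t/2}\mathbf{y} + \sqrt{1-e^{-t}}\,\mathbf{z}\bigr)\, d\gamma^{k-1}(\mathbf{z}),
\]
where $\gamma^{k-1}$ is the standard Gaussian measure on $\R^{k-1}$. Setting $\mathbf{y}=0$ and changing variables produces the Gaussian integral claimed in the theorem. The special case $f=x_j^n$ then reduces to the standard one-dimensional Gaussian moment $\int z^n\, d\gamma^1(z)=0$ for odd $n$ and $(n-1)!!=n!/[2^{n/2}(n/2)!]$ for even $n$, rescaled by $(1-e^{-t})^{n/2}$.

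\textbf{Main obstacle.} The central analytic input is the Mehler formula in the last step; one may cite it from standard Ornstein--Uhlenbeck literature or verify it by noting both sides satisfy $\pt_t u = \tfrac{1}{2}\tilde E u$ with matching initial data $u(0,\cdot)=f$. Steps 1 and 2 are algebraic, but the finite-dimensional reduction in step 2 is essential --- it is exactly what permits interchanging the limit $N\to\infty$ with the operator exponential, bypassing any concern about convergence of unbounded operators on an infinite-dimensional function space.
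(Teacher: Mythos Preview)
Your proposal is correct and follows essentially the same architecture as the paper's proof: reduce to $E^{N,k}$ acting on the finite-dimensional space $\mathcal P^k_{\le\ell}(\R)$, pass to the limit operator $\tilde E=\sum_{j\ge 2}\pt_{x_j}^2-\mathbf y\pt_{\mathbf y}$ by continuity of the matrix exponential, and then identify $e^{t\tilde E/2}$ explicitly.

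The only substantive difference is in the last identification. The paper invokes the special Baker--Campbell--Hausdorff case of Lemma~\ref{lem:BCH}, using the commutator $[-\tfrac{t}{2}\mathbf y\pt_{\mathbf y},\tfrac{t}{2}\sum_j\pt_{x_j}^2]=t\sum_j\pt_{x_j}^2$ to factor
\[
e^{t\tilde E/2}=\exp\!\Bigl(-\tfrac{t}{2}\mathbf y\pt_{\mathbf y}\Bigr)\,\exp\!\Bigl(\tfrac{1-e^{-t}}{2}\sum_{j=2}^k\pt_{x_j}^2\Bigr),
\]
and then recognizes the right factor as a Gaussian convolution (the real Segal--Bargmann transform) and the left as a dilation. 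You instead cite Mehler's formula for the Ornstein--Uhlenbeck semigroup directly. These are the same identity under different names: the BCH factorization \emph{is} a derivation of Mehler's formula. Your route is a touch more economical if one is willing to cite Mehler; the paper's route is more self-contained within the operator-algebraic framework it has already set up. Either way the evaluation at $\mathbf y=0$ kills the dilation/drift and leaves exactly the Gaussian integral with variance $1-e^{-t}$.
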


In other words, $\tilde H^{N-1}_t g$ tends toward having the same moments as a Gaussian centered at $0$ with variance $1-e^{-t}$ for any monomial of $x_2,\dots,x_k$.

\begin{thm}\label{thm:x1}
	For any monomial $f(x_1)=x_1^n$,
	\begin{align*}
		\lim_{N\to\infty}\tilde H^{N-1}_t f &= \lim_{N\to\infty} e^{tD/2} f\\
		&=\dfrac{1}{\sqrt{2\pi t\,(1-e^{-t}-te^{-t})}}\int_\R x_1^n \exp\left(-\dfrac{x_1^2}{2(1-e^{-t}-te^{-t})}\right)\,dx_1 \\
		&=\begin{cases}
			\dfrac{n!}{2^{n/2}(n/2)!}(1-e^{-t}-te^{-t})^{n/2},\quad&\text{$n$ even}\\
			0,\quad&\text{$n$ odd}\\
		\end{cases}
	\end{align*}
\end{thm}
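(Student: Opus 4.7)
The plan is to derive a three-term linear ODE system (indexed by $n$) for the moments $M_n^N(t):=\tilde H^{N-1}_t(x_1^n)=(e^{tD/2}(\tilde x-m)^n)|_{\tilde x=\sqrt N}$, take the large-$N$ limit of the coefficients, and verify that the moments of the claimed Gaussian $\mathcal N(0,v(t))$ with $v(t):=1-e^{-t}-te^{-t}$ satisfy the limit ODE with the same initial conditions. The starting point is a direct expansion of $D$ applied to $(\tilde x-m)^n$: writing $\tilde x=(\tilde x-m)+m$ in each occurrence of $\tilde x$ inside $D$ and collecting terms gives
\[
D(\tilde x-m)^n=\lambda_n^N(\tilde x-m)^n-nm\bigl[1+\tfrac{2n-3}{N}\bigr](\tilde x-m)^{n-1}+n(n-1)\bigl[1-\tfrac{m^2}{N}\bigr](\tilde x-m)^{n-2},
\]
where $\lambda_n^N=-n(1+(n-2)/N)$ is the eigenvalue of $D$ on $\tilde x^n$.

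Since $M_n^N(t)$ depends on $t$ both through the semigroup $e^{tD/2}$ and through $m=m(t,N)$ (with $dm/dt=-\tfrac{N-1}{2N}m$), differentiating in $t$, applying the identity above inside the semigroup, and using the resulting cancellation between the $M_{n-1}^N$ contributions of $(D/2)(\tilde x-m)^n$ and of $(d/dt)[(\tilde x-m)^n]$, one arrives at the linear system
\[
\partial_t M_n^N=\tfrac{\lambda_n^N}{2}M_n^N-\tfrac{n(n-1)m}{N}M_{n-1}^N+\tfrac{n(n-1)}{2}\bigl[1-\tfrac{m^2}{N}\bigr]M_{n-2}^N,
\]
with $M_0^N\equiv1$, $M_1^N\equiv0$ (the latter being the earlier-observed fact that the shift kills the mean of $x_1$), and $M_n^N(0)=0$ for $n\ge 2$ (since at $t=0$ the north pole of the shifted sphere has $x_1=0$).

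Passing to $N\to\infty$ using the uniform-on-compacts limits $m/N\to0$, $m^2/N\to e^{-t}$, and $\lambda_n^N/2\to-n/2$ produces the limit ODE $\partial_t M_n^\infty=-\tfrac{n}{2}M_n^\infty+\tfrac{n(n-1)}{2}(1-e^{-t})M_{n-2}^\infty$ with $M_n^\infty(0)=\delta_{n,0}$; induction on $n$ and standard continuity of linear ODE solutions in their coefficients give $M_n^N(t)\to M_n^\infty(t)$ uniformly on compact $t$-intervals. Finally, denote by $G_n(t)$ the $n$-th moment of $\mathcal N(0,v(t))$, i.e., $G_{2p}(t)=(2p-1)!!\,v(t)^p$ and $G_{2p+1}=0$. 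Using $v(0)=0$, $v'(t)=te^{-t}$, the identity $-v(t)+(1-e^{-t})=te^{-t}$, and the Wick-type recursion $G_n=(n-1)v(t)G_{n-2}$, a routine computation verifies that $G_n$ satisfies the limit ODE with the same initial conditions; by uniqueness $M_n^\infty=G_n$, which is exactly the Gaussian-integral formula claimed in the theorem.

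The main obstacle is Step~2: the ``leading-order'' piece $-nm/2$ of the $M_{n-1}^N$ coefficient from $(D/2)(\tilde x-m)^n$ must cancel precisely with the $n(N-1)m/(2N)$ coming from $dm/dt$, leaving only an $O(m/N)$ residual that vanishes in the limit. This cancellation is the algebraic manifestation of the fact that $m(t,N)$ was chosen to track the mean of $x_1$ under the heat kernel, and it is what makes the limit distribution centered (no $M_{n-1}^\infty$ term in the limit ODE). A related subtle point is that the coefficient of $M_{n-2}^\infty$ in the limit ODE is $(n(n-1)/2)(1-e^{-t})$, obtained from the subleading $1-m^2/N$ rather than from $1$; this combines with the $-n/2$ diagonal term to produce the Gaussian variance $v(t)=1-e^{-t}-te^{-t}$ instead of simply $1-e^{-t}$, the extra $-te^{-t}$ being exactly what distinguishes the heat-kernel limit from a naive Ornstein--Uhlenbeck limit.
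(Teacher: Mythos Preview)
Your proof is correct, and it takes a genuinely different route from the paper's.

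The paper diagonalizes $D$ explicitly: it writes down the eigenpolynomials $p_n(\tilde x)$ with eigenvalues $\lambda_n$, evaluates $p_n(\sqrt N)$ in closed form, inverts to express $\tilde x^n$ in the $\{p_j\}$ basis, and then expands $e^{tD/2}(\tilde x-m)^n|_{\tilde x=\sqrt N}$ as a double sum in which every term carries an explicit rational function of $N$ and $e^{t/(2N)}$. The bulk of the work is a delicate combinatorial and asymptotic analysis (Lemmas~\ref{EO_lem:PW}--\ref{EO_lem:powseries}) showing that all positive powers of $N$ cancel and identifying the surviving constant term as the double-factorial moment.

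Your approach replaces this static diagonalization by a dynamic one: you recognize that $M_n^N(t)$ satisfies a closed lower-triangular linear ODE in $t$ (a three-term recursion in $n$), whose coefficients converge uniformly on compact $t$-intervals as $N\to\infty$, and then invoke continuous dependence of ODE solutions on coefficients. The key algebraic step---the exact cancellation of the leading $-nm/2$ against $-n\,dm/dt$ in the $M_{n-1}^N$ coefficient, leaving only the $O(m/N)$ residual---is precisely the statement that $m(t,N)$ tracks the mean $\tilde H_t^{N-1}\tilde x_1$; this makes the mechanism behind the centered limit completely transparent. The verification that the Gaussian moments $G_n$ solve the limit ODE is then a one-line computation using $v'=te^{-t}$ and $(1-e^{-t})-v=te^{-t}$.

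What each approach buys: yours is shorter, more conceptual, and explains \emph{why} the variance is $1-e^{-t}-te^{-t}$ rather than $1-e^{-t}$ (the $-n/2$ diagonal term combined with the $(1-e^{-t})$ off-diagonal term). The paper's approach, while heavier, produces the full eigenbasis of $D$ and the exact form of every $N^{-j}$ coefficient, which would be the natural starting point if one wanted finite-$N$ corrections or rates of convergence rather than just the limit.
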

In other words, $\tilde H^{N-1}_t f $ tends toward having the same moments as a Gaussian centered at $0$ with variance $1-e^{-t}-te^{-t}$.

%\begin{thm}\label{thm:sep-var}
%	The large-$N$ limit of the operator $\tilde H^{N-1}_t$ satisfies that separation of variables thmerty: for any monomial of the form $x_1^{n_1}x_2^{n_2}\dots x_k^{n_k}$,
%	\begin{align*}
%		&\lim_{N\to\infty}\tilde H^{N-1}_t x_1^{n_1}x_2^{n_2}\dots x_k^{n_k}\\ 
%		&= \lim_{N\to\infty} \left.\left(e^{tD/2} x_1^{n_1}\right)\right|_{x_1=\sqrt N - m(N,t)}\left.
%		\cdot
%		\lim_{N\to\infty}\left(e^{tE/2} x_2^{n_2}\dots x_k^{n_k} \right)\right|_{x_2=\dots=x_k=0}
%	\end{align*}
%	given that each limit of the product on the right-hand side exists.
%\end{thm}

Once we prove these theorems, the proof of the Main Theorem is straightforward.

\begin{proof}[Proof of Theorem~\ref{thm:main}]
	Any polynomial $f$ in $\mathcal P^k(\R)$ is a finite sum of monomials of the form $x_1^{n_1}x_2^{n_2}\dots x_k^{n_k}$, so it suffices to prove the theorem for monomials of this form. So, for simplicity, we assume for simplicity $f(x_1,x_2,\dots,x_k)= x_1^{n_1}x_2^{n_2}\dots x_k^{n_k}$.
	
	First, we note that $\Del_{S}=D+E-\frac{2}{N}x\pt_x\,\mb y\pt_{\mb y}$, but the mixed term $ \frac{2}{N}x\pt_x\,\mb y\pt_{\mb y}$ will not affect the outcome when we let $N\to\infty$.
	
	Indeed, let $\del_{\mb a}\colon \mathcal P^{k}_{\le \ell}(\R)\to \R$ be the evaluation functional $\del_{\mb a} f = f(\mb a)$. Since $D$ only acts on $x_1$-factor, and $E$ only acts on $(x_2,\dots,x_n)$-factor, from Theorem~\ref{thm:x1} we see that the linear functional $\del_{x_1=\sqrt N-m}\, e^{tD/2}$ is bounded, and from Theorem~\ref{thm:x2toxk} the linear functional $\del_{\mb y=\mb 0}\, e^{tE/2}$ is bounded (where the evaluation $x_2=\dots=x_k=0$ is independent of $N$).
	
	Now since $[x_i,x_j]=[\pt_{x_i},\pt_{x_j}]=0$ and $[\pt_{x_i},x_j]=\del_{ij}$, we can verify that $D$ and $E$ commute, so we can write 
	\begin{align*}
		\tilde H_t^{N-1} &= \del_{\tilde{\mb p}}e^{t\Del_s/2}\\
		&=\del_{\tilde {\mb p}} \left[e^{t(D+E)/2}+O\left(\dfrac{1}{N}\right)\right]\\
		&=\del_{\tilde {\mb p}}\,e^{t(D+E)/2} \left[1+e^{-t(D+E)/2}\,O\left(\dfrac{1}{N}\right)\right]\\
		&=\del_{x_1=\sqrt N-m}\,e^{tD/2}\quad
		\del_{\mb y=\mb 0}\,e^{tE/2}\quad \left[1+e^{-t(D+E)/2}\,O\left(\dfrac{1}{N}\right)\right]
	\end{align*}
	In the last equation, we have used the fact that $D$ ad $E$ commute. The first two linear functionals are bounded as explained above, and the right-most terms in the brackets is bounded as a linear operator on the finite-dimensional vector space $\mathcal P^{k}_{\le \ell}(\R)$ on $\del $, which converges to the identity operator (in operator norm). Hence, $\tilde H_t^{N-1}$ converges to $\del_{x_1=\sqrt N-m}\,e^{tD/2}\quad
	\del_{\mb y=\mb 0}\,e^{tE/2}$ when $N\to \infty$.
	
	\medskip
	
	More explicitly, we have 
	\begin{align*}
		\lim_{N\to\infty}\tilde H^{N-1}_t\,f = \lim_{N\to\infty}\left.\left(e^{tD/2} x_1^{n_1}\right)\right|_{x_1=\sqrt N - m(N,t)}\left.
		\lim_{N\to\infty}\left(e^{tE/2} x_2^{n_2}\dots x_k^{n_k} \right)\right|_{x_2=\dots=x_k=0}
 	\end{align*} 
 	
	Then, by Theorems~\ref{thm:x1} and \ref{thm:x2toxk}, 
%	we have 
% 	\begin{align*}
% 		\lim_{N\to\infty}\left.\left(e^{tD/2} x_1^{n_1}\right)\right|_{x_1=\sqrt N - m(N,t)} = 
% 		\dfrac{1}{\sqrt{2\pi t\,(1-e^{-t}-te^{-t})}}\int_\R x_1^n \exp\left(-\dfrac{x_1^2}{2(1-e^{-t}-te^{-t})}\right)\,dx_1\\
% 	\end{align*}
% 	and
% 	\begin{align*}
% 		\lim_{N\to\infty}&e^{tE^{N,k}/2}\,x_2^{n_2}\dots x_k^{n_k}\\
% 		&=\left[2\pi t(1-e^{-t})\right]^{\frac{1-k}{2}}\int_{\R^{k-1}} g(x_2,\dots,x_k)\,\exp\left(-\dfrac{x_2^2+\dots+x_k^2}{2(1-e^{-t})}\right)\,dx_2\dots dx_k
% 	\end{align*}
% 	
% 	Thus, 
 	\begin{align*}
 		\lim_{N\to\infty} &\tilde H^{N-1}_t\, f \\
 		&=c_{t,k}\int_{\R^{k-1}}\,x_2^{n_2}\dots x_k^{n_k}
 		\exp\left(-\dfrac{x_2^2+\dots+x_k^2}{2(1-e^{-t})}\right)
 		\,\int_{\R} x_1^{n_1} \exp\left(-\dfrac{x_1^2}{2(1-e^{-t}-te^{-t})}\right) \,dx_1 dx_2\dots dx_k\\
 		&=\int_{\R^k} f\,du^{\infty,k}_t
 	\end{align*}
 	where
 	\begin{align*}
 		u^{\infty,k}_t (x_1,x_2,\dots,x_k) &= c_{t,k} \exp\left(-\dfrac{x_1^2}{2(1-e^{-t}-te^{-t})}-\dfrac{x_2^2+x_3^2+\dots+x_k^2}{2(1-e^{-t})}\right)
 	\end{align*}
 	and $c_{t,k}=(2\pi t)^{-k/2}(1-e^{-t}-te^{-t})^{-1/2}(1-e^{-t})^{(1-k)/2}$ as we want.
\end{proof}

\begin{lem}[{A special case of the Baker--Campbell--Hausdorff formula, \cite[Lem.~6.10]{Doan2024}}]\label{lem:BCH}
	Given any real square matrices $X$ and $Y$ of the same order, suppose there exists $b\ne 0$ such that $[X,Y]=XY-YX=bY$, then 
	\begin{align*}
		\exp(X+Y) = \exp X\,\exp\left(\dfrac{1-e^{-b}}{b} Y\right)
	\end{align*}
\end{lem}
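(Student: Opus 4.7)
The plan is the standard ``exponential ODE'' trick: I would construct a one-parameter family $g(s)$ of the form $e^{sX}\exp(\alpha(s)Y)$ for a scalar function $\alpha$, choose $\alpha$ so that $g$ satisfies the same linear initial value problem $g'(s) = (X+Y)g(s)$, $g(0)=I$ as $h(s) = e^{s(X+Y)}$, and conclude $g \equiv h$ by uniqueness of solutions to linear matrix ODEs. Setting $s=1$ then yields the claimed factorization.

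The key preliminary step is the twist identity
\[
e^{sX}\, Y \;=\; e^{bs}\, Y\, e^{sX}, \qquad s\in\R.
\]
I would prove it by setting $f(s) = e^{sX} Y e^{-sX}$ and verifying $f(0)=Y$ and
\[
f'(s) \;=\; e^{sX}[X,Y]e^{-sX} \;=\; b\, e^{sX} Y e^{-sX} \;=\; b\,f(s),
\]
so $f(s) = e^{bs} Y$, which is exactly the twist identity after multiplying on the right by $e^{sX}$. This is the single place where the hypothesis $[X,Y] = bY$ is used.

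Using this twist identity, together with the fact that $\frac{d}{ds}\exp(\alpha(s)Y) = \alpha'(s)\, Y\exp(\alpha(s)Y)$ (no Duhamel-type correction is needed, since $Y$ commutes with itself), I would differentiate and obtain
\[
g'(s) \;=\; X g(s) + \alpha'(s)\, e^{sX} Y \exp(\alpha(s)Y) \;=\; \bigl(X + \alpha'(s) e^{bs} Y\bigr) g(s).
\]
Matching the coefficient of $Y$ to $1$ forces $\alpha'(s) = e^{-bs}$, and combined with $\alpha(0) = 0$ this integrates to $\alpha(s) = (1 - e^{-bs})/b$. Uniqueness for the linear ODE $w'(s) = (X+Y)w(s)$, $w(0)=I$, then identifies $g$ with $h$, and evaluation at $s=1$ gives the lemma.

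There is no genuine obstacle; the only places requiring care are the order of factors in the twist identity (one must conjugate $Y$ by $e^{sX}$ on the correct side so the scalar $e^{bs}$ ends up in front of $Y$, not behind it) and the justification that $\exp(\alpha(s)Y)$ can be differentiated without the Duhamel correction—both of which reduce the argument to a scalar ODE for $\alpha$ rather than to a genuinely noncommutative identity.
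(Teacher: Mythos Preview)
Your argument is correct: the twist identity $e^{sX}Ye^{-sX}=e^{bs}Y$ follows exactly as you say from $[X,Y]=bY$, the differentiation of $g(s)=e^{sX}\exp(\alpha(s)Y)$ is handled properly (no Duhamel correction since $\alpha(s)Y$ commutes with its derivative), and the resulting scalar ODE $\alpha'(s)=e^{-bs}$ with $\alpha(0)=0$ yields the claimed factor. The paper itself does not prove this lemma but simply quotes it from \cite[Lem.~6.10]{Doan2024}, so there is nothing to compare against; your ODE-matching proof is the standard route and would serve perfectly well here.
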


\begin{proof}[Proof of Theorem~\ref{thm:x2toxk}]
	We can check that for any monomial $\mb y^\al = x_2^{n_2}\dots x_k^{n_k}$, the operator $\mb y\pt_{\mb y}$ satisfies the following properties:
	\begin{align}
		&\mb y\pt_{\mb y} \left(\mb y^\al\right) = |\al| \mb y^\al,\notag\\
		& \left[\pt_{x_j}^2,x_j\pt_{x_j}\right] = \pt_{x_j}^2\qquad\text{for $2\le j\le k$}\notag\\
		& \left[-\dfrac{t}{2}\mb y\pt_{\mb y},\dfrac{t}{2}\sum_{j=2}^k\pt_{x_j}^2\right] = t\sum_{j=2}^k\pt_{x_j}^2.\label{eqn:comm-relation}
	\end{align}
	
	Since $\mb y\pt_{\mb y}$ does not increase the degree of $\mb y^\al$ and does not introduce new variables beyond $x_2,\dots,x_k$, as a linear operator on the finite dimensional vector space $\mathcal P^k_{\le \ell} (\R)$, it is clear that
	\begin{align*}
		\lim_{N\to\infty} E = \sum_{j=2}^k (\pt_{x_j^2}-x_j\pt_{x_j}) = \sum_{j=2}^k \pt_{x_j^2}-\mb y\pt_{\mb y}
	\end{align*}
	
	The operator on the right-hand side is also known as the Hermite operator. Its exponential can be computed by using Lem.~\ref{lem:BCH} and Eq.~\ref{eqn:comm-relation}:
	\begin{align*}
			\lim_{N\to\infty} e^{tE/2}&=\exp\left\{\dfrac{t}{2}\left(\sum_{j=2}^k \pt_{x_j^2}-\mb y\pt_{\mb y}\right)\right\} 
			&= \underbrace{\exp\left(-\dfrac{t}{2}\mb y\pt_{\mb y}\right)}_{A}\,\underbrace{\exp\left(\dfrac{1-e^{-t}}{2}\sum_{j=2}^k \pt_{x_j}^2\right)}_B
	\end{align*}
	
	The operator $B$ is the real version of the Segal--Bargmann transform (without complexification) from $L^2(\R^{k-1})$ to $L^2(\R^{k-1},d\mu^{k-1}_{1-e^{-t}})$, where $d\mu^{k-1}_{1-e^{-t}}$ is the Gaussian measure with mean $0$ and variance $1-e^{-t}$ for each variable $x_2,\dots,x_j$ (see, \cite{Doan2024}, Sect.~3, and the references therein). The transform can be expressed as an integral
	\begin{align*}
		(Bf)(y)= \left(2\pi (1-e^{-t})\right)^{-k/2}\int_{\R^{k-1}} f(z)\,e^{-\frac{|z-y|^2}{2(1-e^{-t})}}\,dz. 
	\end{align*}  
	
	The operator $A$ is a dilation map $(Af)(\mb y) = f\left(e^{-t/2}\mb y\right)$ (see \cite[Sect.~6.1]{Doan2024}). Hence, together when evaluating at $x_2=\dots=x_k=0$, we have 
	\begin{align*}
		\lim_{N\to\infty} &\left.\left(e^{tE/2} f\right) \right|_{\mb y=\mb 0} \\
		&=  \left.\left(2\pi (1-e^{-t})\right)^{-k/2}\int_{\R^{k-1}} f(\mb z)\,\exp\left\{-\dfrac{\left|\mb \mb z-e^{-t/2}\mb y\right|^2}{2(1-e^{-t})}\right\}\,d\mb z\quad\right|_{\mb y=\mb 0}\\
		&=\left(2\pi (1-e^{-t})\right)^{-k/2}\int_{\R^{k-1}} f(\mb z)\,\exp\left\{-\dfrac{\left|\mb z\right|^2}{2(1-e^{-t})}\right\}\,d\mb z
	\end{align*}
	which is the conclusion of Thm.~\ref{thm:x2toxk}.
\end{proof}

For the proof of Theorem~\ref{thm:x1}, we need to use another approach, because when $N\to\infty$, the coordinate $\tilde x$ moves further away from $x$, and the evaluation $x=\sqrt N-m$ also tends to infinity. 

\begin{lem}
	For any $N$, the eigenspaces of operator $D$ are all one-dimensional, each of which is spanned by the following polynomial of degree $n$, namely
	\begin{align*}
		p_n(\tx) = \sum_{j=0}^{\floor{n/2}} \left(- \frac{N}{4}\right)^j \frac{n^{\underline{2j}}}{j! \left(\frac{N}{2} + n - 2\right)^{\underline{j}}}\, \tx^{n - 2j}
	\end{align*}
	corresponding to eigenvalue
	\begin{align*}
		\lambda_n = -n\left(1 + \frac{n-2}{N}\right).
	\end{align*}
	where $z^{\underline{j}}$ denotes the falling factorial $z^{\underline{j}}=z(z-1)(z-2)\dots(z-j+1)$.
\end{lem}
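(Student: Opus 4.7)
The plan is to exhibit $D$ as a two-band lower-triangular operator in the monomial basis of $\tilde{x}$, so that the eigenvalues can be read off the diagonal and the eigenvectors produced by a short recurrence. First I would apply $D$ to $\tilde{x}^n$ directly: since $\partial_x = \partial_{\tilde{x}}$, we have $\tilde{x}\partial_x\,\tilde{x}^n = n\tilde{x}^n$ and $(\tilde{x}\partial_x)^2\tilde{x}^n = n^2\tilde{x}^n$, yielding
\begin{align*}
    D\tilde{x}^n = n(n-1)\tilde{x}^{n-2} + \lambda_n \tilde{x}^n, \qquad \lambda_n = -n\left(1+\frac{n-2}{N}\right).
\end{align*}
Thus $D$ preserves each finite-dimensional space $\mathcal{P}^1_{\le \ell}(\R)$ of polynomials in $\tilde{x}$ of degree at most $\ell$, acts there as a lower-triangular matrix with diagonal $(\lambda_0, \lambda_1, \dots, \lambda_\ell)$, and preserves the parity of the degree.

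Second, I would verify that the eigenvalues are pairwise distinct. A short calculation gives $\lambda_n - \lambda_m = (m-n)(N+m+n-2)/N$, which is nonzero for $m \neq n$ whenever $N \ge 2$, since the factor $N+m+n-2$ is then strictly positive (except in the trivial case $m=n=0$). Consequently the triangular matrix has distinct diagonal entries, so each eigenspace is one-dimensional; moreover, matching leading coefficients forces the $\lambda_n$-eigenvector to have degree exactly $n$.

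Third, I would produce this eigenvector explicitly. Parity preservation forces $p_n = \sum_{j=0}^{\lfloor n/2\rfloor} a_j \tilde{x}^{n-2j}$ with $a_0 = 1$. Equating the $\tilde{x}^{n-2j}$-coefficients in $Dp_n = \lambda_n p_n$ gives the two-term recurrence
\begin{align*}
    a_j(\lambda_{n-2j} - \lambda_n) = -a_{j-1}(n-2j+2)(n-2j+1),
\end{align*}
and substituting $\lambda_{n-2j} - \lambda_n = 2j(N+2n-2j-2)/N$ simplifies this to
\begin{align*}
    \frac{a_j}{a_{j-1}} = -\frac{N(n-2j+2)(n-2j+1)}{4j(N/2 + n - j - 1)}.
\end{align*}
Telescoping from $a_0 = 1$ collects $(-N)^j \cdot n^{\underline{2j}}$ in the numerator and $4^j\,j!\,(N/2+n-2)^{\underline{j}}$ in the denominator, producing the stated formula.

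The main obstacle I anticipate is the bookkeeping in the telescoping step, where one must recognize the descending product $(N/2+n-2)(N/2+n-3)\cdots(N/2+n-j-1)$ as the falling factorial $(N/2+n-2)^{\underline{j}}$, and confirm this factorial is nonzero for $1 \le j \le \lfloor n/2\rfloor$ when $N \ge 2$ so that no step of the recurrence degenerates. Both checks are routine, and no idea beyond the triangular-matrix diagonalization argument is required.
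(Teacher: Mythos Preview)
Your proof is correct and follows essentially the same approach as the paper: compute $D\tilde x^n = n(n-1)\tilde x^{n-2} + \lambda_n \tilde x^n$, observe the resulting two-term recurrence for the coefficients of an eigenvector, telescope to obtain the closed form, and use distinctness of the $\lambda_n$ to conclude one-dimensionality of the eigenspaces. Your version is slightly more explicit than the paper's (you write out $\lambda_n-\lambda_m$ and check non-degeneracy of the falling factorial), but the argument is the same.
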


\begin{proof}
	Note that for the monomial $\tilde x^n$, we have $(\tx\pt_x)\, \tx^n = n\tx^n$, so 
	\begin{align*}
		D\,\tilde x^n = n(n-1)\tx^{n-2}+\lam_n\tx^n.
	\end{align*}
	
	Then, we can write 
	\begin{align*}
		p_n(\tx) = \tx^n + a_1\tx^{n-2} + a_2 \tx^{n-4} + \dots + a_{\floor{n/2}}\tx^{n-2\floor {n/2}}.
	\end{align*} 
	Solving $D p_n(\tx) =\lam_n p_n(\tx)$ we obtain the recurrence relation
	\begin{align*}
		&n(n-1) + \lam_{n-2}\, a_1= \lam_n\, a_1 \\
		&(n-2j-2)(n-2j-1) a_{j-1} + \lam_{n-2j}\, a_j= \lam_n\,a_j,\quad j>1 
	\end{align*}
	from which we can solve for the coefficient of $\tx^{n-2j}$ as obtained in the statement of the lemma.  
	
	Noting that since the eigenvalues $\lambda_n$ are distinct, $D$ is diagonalizable over $\mathbb Q$ and there cannot be any more eigenvectors.
\end{proof}

Next, we evaluate the eigen-polynomials at the desired point $\tx = \sqrt{N}$.

\begin{lem}\label{EO_lem:PW}
	\begin{align*}
		p_n\left(\sqrt{N}\right) = % \frac{n^{k/2} \left(\dfrac{n-1}{2}\right)^{\overline{\floor{k/2}}}}
		% {\left(\dfrac{n}{2} + k - 2\right)^{\underline{\floor{k/2}}}}
		\frac{N^{n/2} (N-1)^{\overline{n}}}{2^n\left(\frac{N}{2}\right)^{\overline{n}}}.
	\end{align*}
	where $z^{\overline j}$ denotes the rising factorial $z^{\overline j} = z(z+1)\dots(z+j-1)$.
\end{lem}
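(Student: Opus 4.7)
The plan is to recognize $D$ as a rescaled classical ultraspherical (Gegenbauer) operator and then read off $p_n(\sqrt N)$ from the classical boundary evaluation $C_n^\alpha(1) = (2\alpha)^{\overline n}/n!$. First I would simplify $D$ by using $(\tilde x\pt_x)^2 = \tilde x^2 \pt_{\tilde x}^2 + \tilde x\pt_{\tilde x}$ to get
\[
D \;=\; \Bigl(1 - \tfrac{\tilde x^2}{N}\Bigr)\pt_{\tilde x}^{\,2} \;-\; \Bigl(1 - \tfrac{1}{N}\Bigr)\tilde x\,\pt_{\tilde x}.
\]
After the substitution $u = \tilde x/\sqrt N$ this becomes $\tfrac 1N\bigl[(1-u^2)\pt_u^{\,2} - (N-1)\,u\pt_u\bigr]$, which is exactly the Gegenbauer differential operator with parameter $\alpha = (N-2)/2$; its eigenvalue $-n(n+2\alpha)$ reproduces $N\lam_n$.

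\medskip

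The preceding lemma tells us the eigenspaces of $D$ are one-dimensional, so $p_n(\tilde x)$ must be a scalar multiple of $C_n^{(N-2)/2}(\tilde x/\sqrt N)$. Matching leading coefficients in $\tilde x$ against the standard value $2^n(\alpha)^{\overline n}/n!$ of the leading coefficient of $C_n^\alpha$ in $u$ pins the constant down:
\[
p_n(\tilde x) \;=\; \frac{n!\,N^{n/2}}{2^n\,\bigl(\tfrac{N-2}{2}\bigr)^{\overline n}}\; C_n^{(N-2)/2}\!\Bigl(\tfrac{\tilde x}{\sqrt N}\Bigr).
\]
Evaluating at $\tilde x = \sqrt N$ (i.e.\ $u=1$) and invoking $C_n^\alpha(1) = (2\alpha)^{\overline n}/n!$ --- which one reads off by setting $u=1$ in the generating function $(1-2ut+t^2)^{-\alpha} = \sum_n C_n^\alpha(u)\,t^n$ --- immediately gives $p_n(\sqrt N)$ as a product of rising factorials.

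\medskip

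The remaining and fussiest step is to rearrange this product into the precise form stated in the lemma. This is purely Pochhammer-symbol arithmetic: one uses the duplication identity $(2z)^{\overline{2m}} = 4^m(z)^{\overline m}(z+\tfrac12)^{\overline m}$ (and its odd-length analogue) to split $(N-2)^{\overline n}$ into even- and odd-indexed factors, cancels against $2^n((N-2)/2)^{\overline n}$, and telescopes the remainder into the claimed ratio involving $(N-1)^{\overline n}$ and $(N/2)^{\overline n}$. The main obstacle here is not conceptual but clerical --- keeping track of off-by-one index shifts and separating the two parities of $n$ before unifying. An equivalent plan, bypassing Gegenbauer polynomials altogether, is to substitute $\tilde x=\sqrt N$ directly into the series for $p_n$, apply $n^{\underline{2j}} = 4^j(n/2)^{\underline j}((n-1)/2)^{\underline j}$ and $x^{\underline j}=(-1)^j(-x)^{\overline j}$ to recast the sum as a terminating ${}_2F_1\bigl(-n/2,(1-n)/2;\,2-n-N/2;\,1\bigr)$, and close by Chu--Vandermonde; the two routes are the same computation organized differently.
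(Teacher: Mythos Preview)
Your approach is correct and genuinely different from the paper's. The paper does not invoke Gegenbauer polynomials; it substitutes $\tilde x=\sqrt N$ directly into the series for $p_n$, factors $n^{\underline{2j}}=4^j\lfloor n/2\rfloor^{\underline j}\bigl(\lceil n/2\rceil-\tfrac12\bigr)^{\underline j}$, and collapses the resulting alternating sum with the Vandermonde identity $\sum_j(-1)^j\binom{a}{j}\binom{b-j}{c}=\binom{b-a}{c-a}$ --- precisely your ``alternative route'' read as a terminating ${}_2F_1$ at $1$. Your primary route via $C_n^\alpha(1)=(2\alpha)^{\overline n}/n!$ buys a parity-free one-line evaluation once the normalization constant is fixed; the paper's direct computation is more elementary (no special-function input) but has to carry floors and ceilings through and then uses a separate ratio argument $p_{n+1}(\sqrt N)/p_n(\sqrt N)$ to merge the even and odd cases into a single closed form.

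One warning about your final paragraph. The Gegenbauer calculation hands you
\[
p_n(\sqrt N)\;=\;\frac{N^{n/2}\,(N-2)^{\overline n}}{2^{n}\,\bigl(\tfrac{N-2}{2}\bigr)^{\overline n}},
\]
and this is the correct value: it matches $p_1(\sqrt N)=\sqrt N$, $p_2(\sqrt N)=N-1$, $p_3(\sqrt N)=N^{3/2}(N-1)/(N+2)$, and it agrees with the intermediate expression the paper itself derives inside the proof. It is \emph{not}, however, equal to the displayed formula in the lemma statement: that version, with $(N-1)^{\overline n}$ over $(N/2)^{\overline n}$, already fails at $n=1$, giving $\sqrt N(N-1)/N$ rather than $\sqrt N$. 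So the ``fussy Pochhammer arithmetic'' you are bracing for would never land on the stated form --- the discrepancy is a typo in the paper's final simplification, not a gap in your argument. Keep your expression; it is the right one.
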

\begin{proof}
	Breaking up
	\begin{align*}
	n^{\underline{2j}} = 4^j \floor {\frac{n}{2}} ^{\underline{j}}\left(
		\ceil{\dfrac{n}{2}}- \dfrac{1}{2}\right)^{\underline{j}},
	\end{align*}
	we get
	\begin{align*}
		\frac{p_n\left(\sqrt{N}\right)}{N^{n/2}} &= \sum_{j=0}^{\floor{n/2}} (-1)^j \frac{ \floor{\dfrac{n}{2}}^{\underline{j}}}{j!} \frac{\left(\ceil{\dfrac{n}{2}} - \dfrac{1}{2}\right)^{\underline{j}}}{\left(\dfrac{N}{2} + n - 2\right)^{\underline{j}}} \\
		&= \frac{1}{\dbinom{N/2 + n - 2}{\ceil{n/2} - 1/2}}\sum_{j=0}^{\floor{n/2}} (-1)^j \binom{\floor{n/2}}{j} \binom{N/2 + n - 2 - j}{N/2 + \floor{n/2} - 3/2},
	\end{align*}
	assuming that $n$ has a generic value so that the indicated binomial coefficients are defined. Now applying the standard formula
	\begin{align*}
		\sum_{j=0}^a (-1)^j \binom{a}{j} \binom{b-j}{c} = \binom{b - a}{c - a},
	\end{align*}
	valid for $a \in \mathbb N$, $b, c \in \mathbb C$, we get a form of the answer:
	\begin{align*}
		p_n\left(\sqrt{N}\right) = \frac{N^{n/2} \left(\dfrac{N-1}{2}\right)^{\overline{\floor{n/2}}}}
	{\left(\dfrac{N}{2} + n - 2\right)^{\underline{\floor{n/2}}}}.
	\end{align*}
	The simplified form seen in the lemma can be derived by noting that
	\begin{align*}
		\frac{p_{n+1}(\sqrt{N})}{p_n(\sqrt{N})} = \sqrt{N} \cdot \frac{N-1+n}{N + 2n}
	\end{align*}
	regardless of the parity of $n$.
\end{proof}

Next, we invert the change-of-basis matrix relating the two bases $\{p_n\}$ and $\{\tx^n\}$.
\begin{lem}\label{EO_lem:Pinv}
	\begin{align*}
	\tx^n = \sum_{j=0}^{\floor{n/2}} \left(\frac{N}{4}\right)^j \frac{n^{\underline{2j}}}{j! \left(\dfrac{N}{2} + n - j - 1\right)^{\underline{j}}} p_{n - 2j}.
	\end{align*}
\end{lem}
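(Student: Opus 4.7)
The plan is to view this lemma as the inversion of the change of basis from the previous lemma. Denote by $A=(c_{n,j})$ with $c_{n,j}=\left(-\frac{N}{4}\right)^j n^{\underline{2j}}/[j!\,(N/2+n-2)^{\underline{j}}]$ the matrix of the expansion $p_n=\sum_{j}c_{n,j}\tx^{n-2j}$; since each $p_n$ is a monic polynomial of degree $n$ (with only terms of the same parity as $n$), $A$ is lower triangular with unit diagonal in each parity block and hence invertible. The lemma is therefore equivalent to the scalar identity
\begin{align*}
\sum_{j=0}^{m} c_{n,j}\,d_{n-2j,\,m-j} \;=\; \delta_{m,0} \qquad (0 \le m \le \floor{n/2}),
\end{align*}
obtained by substituting the proposed expansion of $\tx^{n-2j}$ back into $p_n=\sum_j c_{n,j}\tx^{n-2j}$ and reading off the coefficient of $p_{n-2m}$. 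The $m=0$ case is immediate since $c_{n,0}=d_{n,0}=1$.

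For $m\ge 1$, the key simplification in multiplying $c_{n,j}\,d_{n-2j,m-j}$ is the telescoping product of falling factorials $n^{\underline{2j}}(n-2j)^{\underline{2(m-j)}}=n^{\underline{2m}}$, which together with $(-N/4)^j(N/4)^{m-j}=(-1)^j(N/4)^m$ lets me pull the common factor $(N/4)^m n^{\underline{2m}}/m!$ out of the sum. Writing $M=N/2+n-2$, the claim reduces to
\begin{align*}
\sum_{j=0}^{m}\frac{(-1)^j\binom{m}{j}}{M^{\underline{j}}\,(M-j-m+1)^{\underline{m-j}}}=0.
\end{align*}
Rewriting the falling factorials in the denominator as
\begin{align*}
M^{\underline{j}}(M-j-m+1)^{\underline{m-j}}=\frac{\Gamma(M+1)\,\Gamma(M-j-m+2)}{\Gamma(M-j+1)\,\Gamma(M-2m+2)},
\end{align*}
factoring out the $j$-independent constant, and noting that $\Gamma(M-j+1)/\Gamma(M-j-m+2)=(M-j)^{\underline{m-1}}$, the identity collapses to
\begin{align*}
\sum_{j=0}^{m}(-1)^j\binom{m}{j}(M-j)^{\underline{m-1}}=0.
\end{align*}

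Since $(M-j)^{\underline{m-1}}$ is a polynomial in $j$ of degree $m-1<m$, this is the classical vanishing of the $m$-th forward difference of any polynomial of degree less than $m$. The main obstacle I anticipate is the careful bookkeeping of falling factorials, the sign tracking coming from $(-N/4)^j$, and the manipulation of $\Gamma$-ratios to see that the only $j$-dependence in the summand is a polynomial of degree $m-1$; once that reduction is made, the finite-difference identity closes the argument immediately.
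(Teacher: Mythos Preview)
Your proposal is correct. The paper's proof is a one-line ``straightforward computation using back substitution, noting that $p_0=1$ and $p_1=\tx$,'' whereas you supply an explicit verification that the two triangular change-of-basis matrices compose to the identity, reducing the identity for each $m\ge 1$ to the vanishing of the $m$th finite difference of the degree-$(m-1)$ polynomial $(M-j)^{\underline{m-1}}$. This is a slightly different route in style---direct verification of the inverse rather than recursive derivation---but both are elementary, and your version has the advantage of collapsing everything into a single classical identity once the bookkeeping with falling factorials and $\Gamma$-ratios is done. One small caveat worth recording is that the $\Gamma$-function manipulation is valid for generic $M=N/2+n-2$ (so generic $N$), and the conclusion then extends to all relevant $N$ since both sides of the lemma are rational in $N$; this is harmless in the paper's regime $N>k$ but is good to make explicit.
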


\begin{proof}
	Straightforward computation using back substitution, noting that $p_0=1$ and $p_1=\tx$. % It is easier to prove that
\end{proof}

\begin{lem}\label{EO_lem:powseries}
	Fix a nonnegative integer $j$. The factor
	\[
		t_0(h) = \frac{(N-1)^{\overline{h}}}{2^h \left(\dfrac{N}{2}\right)^{\overline{h+j}}},
	\]
	can be expressed as a power series %at $\infty$,
	\begin{equation}\label{EO_eq:pow_ser_t}
		t_0(h) = \sum_{\ell \geq 0} \frac{u_\ell(h)}{N^{\ell + j}}
	\end{equation}
	%in the sense that for nonnegative integers $p$, the power series \eqref{EO_eq:pow_ser_t} is the Taylor series at $N=\infty$ for $t_0(p)$. %, which is a rational function of $N$. 
	where the coefficients $u_\ell(h)$ are polynomials with leading term
	\begin{equation}
		u_\ell(h) = \frac{(-1)^\ell 2^{j-\ell}}{ \ell!} h^{2\ell} \left(1 + O\left(\frac{1}{h}\right)\right).\notag
	\end{equation}
	% and let $t_s(m)$ be the $s$th finite difference:
	% \[
	%   t_s(m) = t_{s-1}(m+1) - t_{s-1}(m) = \sum_{i = 0}^s (-1)^{s - i} \binom{s}{i} t_0(m + i).
	% \]
	% Then we have the estimate, for fixed $m$,
	% \[
	%   t_s(m) = \begin{cases}
		%     (\text{TODO const}) n^{-s/2} (1 + O(1/n)), & \text{$n$ even} \\
		%     O\left(n^{\frac{-s-1}{2}}\right), & \text{$n$ odd.}
		%   \end{cases}
	% \]
\end{lem}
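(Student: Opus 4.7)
The plan is to normalize $t_0(h)$ so that only corrections to $1$ remain, exploit the expansion $\log(1+x)=\sum_{k\ge 1}(-1)^{k+1}x^k/k$ together with Faulhaber's formula for power sums, and finally exponentiate and identify leading terms combinatorially.

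First, I would factor out the expected leading order $2^j/N^j$. Writing $(N-1)^{\overline{h}}=\prod_{i=-1}^{h-2}(N+i)=N^h\prod_{i=-1}^{h-2}(1+i/N)$ and $2^h(N/2)^{\overline{h+j}}=2^{-j}N^{h+j}\prod_{i=0}^{h+j-1}(1+2i/N)$, one obtains
\[
t_0(h)=\frac{2^j}{N^j}\,R(h,N),\qquad R(h,N)=\frac{\prod_{i=-1}^{h-2}(1+i/N)}{\prod_{i=0}^{h+j-1}(1+2i/N)}.
\]
Since $R$ is a rational function of $N$ tending to $1$ at infinity, it admits a unique Laurent expansion $R(h,N)=\sum_{\ell\ge 0}c_\ell(h)/N^\ell$ with $c_0=1$, and it suffices to show each $c_\ell(h)$ is a polynomial in $h$ with leading term $(-1)^\ell h^{2\ell}/(2^\ell\ell!)$; multiplying by $2^j$ then gives the claimed $u_\ell(h)$.

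Second, I would take the formal logarithm and expand each $\log(1+\cdot)$ as a power series in $1/N$, yielding
\[
\log R(h,N)=\sum_{k\ge 1}\frac{A_k(h)}{N^k},\qquad A_k(h)=\frac{(-1)^{k+1}}{k}\Bigl(\sum_{i=-1}^{h-2}i^k-2^k\sum_{i=0}^{h+j-1}i^k\Bigr).
\]
By Faulhaber's formula each power sum is a polynomial of degree $k+1$ in its upper limit, so $A_k(h)$ is a polynomial in $h$ of degree $k+1$. Comparing the two $h^{k+1}/(k+1)$ contributions gives $A_k(h)=\frac{(-1)^{k+1}(1-2^k)}{k(k+1)}h^{k+1}+O(h^k)$; in particular $A_1(h)=-h^2/2+O(h)$, which is the decisive input for the next step.

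Finally, I would exponentiate and read off coefficients:
\[
R(h,N)=\exp(\log R)=\sum_{\ell\ge 0}\frac{c_\ell(h)}{N^\ell},\qquad c_\ell(h)=\sum_{\substack{(m_k)_{k\ge 1}\\ \sum k\,m_k=\ell}}\prod_{k\ge 1}\frac{A_k(h)^{m_k}}{m_k!}.
\]
Each $c_\ell$ is a finite sum of products of polynomials in $h$, hence polynomial. The $h$-degree of each summand equals $\sum_k(k+1)m_k=\ell+\sum_k m_k$, which subject to $\sum_k k\,m_k=\ell$ is maximized \emph{uniquely} by $m_1=\ell$, $m_k=0$ for $k\ge 2$, giving degree $2\ell$; every other composition has $\sum_k m_k<\ell$ and therefore contributes at $h$-degree strictly less than $2\ell$. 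Consequently the $h^{2\ell}$-coefficient of $c_\ell(h)$ comes solely from $A_1(h)^\ell/\ell!$ at leading order, namely $(-1/2)^\ell/\ell!$, so $u_\ell(h)=2^jc_\ell(h)=\frac{(-1)^\ell 2^{j-\ell}}{\ell!}h^{2\ell}\bigl(1+O(1/h)\bigr)$. The main obstacle is precisely this degree bookkeeping: one must verify the strict inequality $\ell+\sum_k m_k<2\ell$ whenever $(m_k)\neq(\ell,0,0,\ldots)$, which rules out any cancellation or interference at the $h^{2\ell}$ level and pins down the leading coefficient unambiguously.
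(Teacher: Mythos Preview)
Your argument is correct and proceeds along a genuinely different route from the paper. The paper establishes the expansion by observing that $t_0$ satisfies the first-order recurrence $(N+2h+j+2)\,t_0(h+1)=(N+h)\,t_0(h)$ with $t_0(0)=1/(N/2)^{\overline{j}}$, translates this into the finite-difference relation $u_\ell(h+1)-u_\ell(h)=h\,u_{\ell-1}(h)-(2h+j+2)\,u_{\ell-1}(h+1)$, and then determines the degree and leading coefficient of $u_\ell$ by a short induction on $\ell$. Your approach instead writes $t_0(h)=(2^j/N^j)R(h,N)$, expands $\log R$ via the Taylor series of $\log(1+x)$ and Faulhaber's formula, and then reads off $c_\ell$ from the exponential Bell-type expansion; the key degree count $\deg\prod_k A_k^{m_k}=\ell+\sum_k m_k$, maximized uniquely at $m_1=\ell$, cleanly isolates the $h^{2\ell}$ term as $A_1(h)^\ell/\ell!$. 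The recurrence method is lighter on machinery (no Faulhaber, no exponential formula) and makes polynomiality in $h$ immediate, but extracts the leading coefficient only through an induction; your method is more direct and delivers the leading coefficient in one shot from the single computation $A_1(h)=-h^2/2+O(h)$, at the cost of invoking Faulhaber and the combinatorial bookkeeping you flag at the end.
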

\begin{proof}
	The recurrence relation
	\begin{align}
		(N + 2h + j + 2)\, t_0(h+1) &= (N + h)\, t_0(h) \label{EO_eq:rec_t}\\
		t_0(0) &= \frac{1}{\left(\dfrac{N}{2}\right)^{\overline{j}}} \label{EO_eq:init_t}
	\end{align}
	uniquely specifies $t_0(h)$. Writing \eqref{EO_eq:rec_t} in terms of the $u_\ell$, we get
	\[
		u_{\ell}(h+1) - u_\ell(h) = h u_{\ell - 1}(h) - (2 h + j + 2) u_{\ell - 1}(h + 1)
	\]
	for $\ell \geq 0$, where $u_{-1} = 0$. If $u_{\ell - 1}$ is known, this uniquely determines $u_\ell$ up to an additive constant, and the constant is determined by \eqref{EO_eq:init_t}. Consequently a power series of the form \eqref{EO_eq:pow_ser_t} exists. It is an easy induction to determine the leading term of $u_\ell$.
\end{proof}

%=====================================

\begin{proof}[Proof of Prop.~\ref{prop:mean-x1}]
Now, since
\begin{align}\label{EO_eq:e}
	e^{tD/2} p_n = e^{t\lam_n/2} p_n = \exp\left\{-\frac{nt}{2} + \frac{(2n - n^2)t}{2N}\right\} p_n,
\end{align}
we can obtain the moment $e^{tD/2} x^n$ by converting $x^n$ to the basis $\{p_n\}$ with Lemma \ref{EO_lem:Pinv}, applying $e^{tD/2}$ using \eqref{EO_eq:e}, and then plugging in $x = \sqrt{N}-m$ using Lemma \ref{EO_lem:PW}:
\begin{align}
	&\left. e^{tD/2}x^n \right|_{x = \sqrt{N}-m} = \left. e^{tD/2}(\tx-m)^n \right|_{\tx = \sqrt{N}} \\
	&= \left. \sum_{i=0}^{n} \binom{n}{i} (-m)^i \left(e^{tD/2} \tx^{n-i}\right) \right|_{\tx = \sqrt{N}} \notag \\
	&= \left. \sum_{i=0}^{n} \binom{n}{i} (-m)^i e^{tD/2} \left(\sum_{j=0}^{\floor{\frac{n - i}{2}}} \left(\frac{N}{4}\right)^j \frac{(n - i)^{\underline{2j}}}{j! \left(\dfrac{N}{2} + n - i - j - 1\right)^{\underline{j}}} p_{n - i - 2j} \right) \right|_{\tx = \sqrt{N}} \notag \\
	&= \sum_{i=0}^{n} \binom{n}{i} (-m)^i \sum_{j=0}^{\floor{\frac{n - i}{2}}} \left(\frac{N}{4}\right)^j \frac{(n - i)^{\underline{2j}}}{j! \left(\dfrac{N}{2} + n - i - j - 1\right)^{\underline{j}}}\, e^{t \lambda_{n - i - 2j}/2} p_{n - i - 2j}(\sqrt{N})\label{EO_eq:dbl_sum}
\end{align}

For fixed $n$, this is a rational function of $\sqrt{N}$, $e^{t/2}$, and $e^{t/(2N)}$. To get the limit as $N \to \infty$, we expand $e^{\pm t/(2N)}$ to express $e^{tD/2}$ as a convergent power series in $1/\sqrt{N}$ (leaving $e^{t/2}$ as it is) and claim that
\begin{enumerate}
	\item All terms with a positive power of $N$ cancel.
	\item The term not involving $N$ is the claimed moment of Theorem \ref{thm:x1}.
\end{enumerate}

Observe that the exponential factor of each term of \eqref{EO_eq:dbl_sum}, coming from the values of $m$ and $\lam_n$, is
\begin{align*}
	& \exp\left\{\frac{-N + 1}{2N} t i -(n - i - 2j)\left(1 + \frac{n - i - 2j - 2}{N}\right) \frac{t}{2} \right\} \\
	&= \exp\left\{\frac{t}{2}\left(-n + 2j\right) + \frac{t}{2N}\left[i - (n - i - 2j)(n - i - 2j - 2)\right]\right\}
\end{align*}

Accordingly, we fix $j$. Write
\[
i - (n-i - 2j)(n-i-2j-2) = -i^2 + k_1 i + k_2,
\]
where the $k_i$ are constants that will not matter. We let $c_n^{(j)}$ denote the coefficient of $e^{t (-n + 2j)/2}$ in $e^{tD/2} \tx^n|_{\tx=\sqrt N}$, then
\[
	c_n^{(j)} = \frac{N^j}{4^j j!}\sum_{i=0}^{n - 2j} (-1)^i \binom{n}{i} \frac{N^{i/2}(n - i)^{\underline{2j}}}{\left(\dfrac{N}{2} + n - i - j - 1\right)^{\underline{j}}} e^{(-i^2 + k_1 i + k_2)\frac{t}{2N}}\ p_{n - i - 2j}(\sqrt{N}).
\]
By the change of variable $h = n - i - 2j$, we get (for some constants $k_1'$, $k_2'$)
\begin{align*}
	c_n^{(j)} &= \frac{(-1)^n\, N^{k/2}}{4^j j!}\sum_{h=0}^{n - 2j} (-1)^h \binom{n}{h + 2j} \frac{\left(h + 2j\right)^{\underline{2j}}}{\left(\dfrac{N}{2} + h + j - 1\right)^{\underline{j}}} e^{(-h^2 + k'_1 h + h'_2)\frac{t}{2N}} \frac{p_{m}(\sqrt{N})}{N^{h/2}} \\
	&= \frac{(-1)^n N^{n/2}}{4^j j!}\sum_{h=0}^{n - 2j} (-1)^h \binom{n}{h + 2j} \frac{\left(h + 2j\right)^{\underline{2j}}}{\left(\dfrac{N}{2} + h + j - 1\right)^{\underline{j}}} \frac{ (N-1)^{\overline{h}}}{2^h\left(\dfrac{N}{2}\right)^{\overline{h}}}. e^{(-h^2 + k'_1 h + k'_2)\frac{t}{2N}}.
\end{align*}
With a bit of rearrangement of factorials, we get
\begin{equation}
	c_n^{(j)} = \frac{(-1)^n\, N^{n/2}\, n!}{4^j j! (n - 2j)!} \sum_{h=0}^{n - 2j} (-1)^h \binom{n - 2j}{h} \frac{(N-1)^{\overline{h}}}{2^h \left(\dfrac{N}{2}\right)^{\overline{h+j}}} e^{\left(-h^2 + k_1'h + k_2'\right)\frac{t}{2N}}.
\end{equation}

Now, from Lem~\ref{EO_lem:powseries} examining the power-series expansion
\begin{align*}
	c_n^{(j)} &= \frac{(-1)^n\, N^{k/2}\, n!}{4^j\,j!\,(n - 2j)!} \sum_{h=0}^{n - 2j} (-1)^h \binom{n - 2j}{h} \frac{(N-1)^{\overline{h}}}{2^h \left(\dfrac{N}{2}\right)^{\overline{h+j}}} \exp\left\{\left(-h^2 + k_1'\,h + h_2'\right)\,\frac{t}{2N}\right\} \\
	&= \frac{(-1)^n N^{n/2} n!}{4^j j! (n - 2j)!} \sum_{h=0}^{n - 2j} (-1)^h \binom{n - 2j}{h} \left[ \sum_{\ell \geq 0} \frac{u_\ell(h)}{N^{\ell + j}} \sum_{i \geq 0} \frac{t^i}{2^i i! N^i} \left(-h^2 + k_1'\, h + k_2'\right)^i\right],
\end{align*}
we see that the terms with a nonnegative power of $n$ are those with
\begin{equation}\label{EO_eq:likj}
	\ell + i \leq n/2 - j.
\end{equation}
Moreover, each such term has degree at most $2\ell + 2i$ as a polynomial in $m$. Thus, when the inequality \eqref{EO_eq:likj} is strict, the term vanishes upon taking the $(n - 2j)$th finite difference. In particular, when $k$ is odd, we get a limiting value of $0$ as claimed in Prop.~\ref{prop:mean-x1}. Hence we can restrict our attention to the equality case in \eqref{EO_eq:likj} and to terms of leading order in $h$, getting

\begin{align*}
	\lim_{N \to \infty} c_{n}^{(j)}
	&= \frac{n!}{4^j j!} \sum_{i = 0}^{n/2 - j} \text{coef}_{\, h^{n - 2j - 2i}} [u_{n/2 - j - i}(h)]  \cdot \frac{(-t)^i}{2^i i!} \\
	&= \frac{n!}{4^j j!} \sum_{i = 0}^{n/2 - j} \frac{(-1)^{n/2 - j - i}2^{i + 2j - n/2}}{ (n/2 - j - i)!} \cdot \frac{(-t)^i}{2^i i!} \\
	&= \frac{(-1)^{n/2 - j} n!}{2^{n/2} j! (n/2 - j)!} \sum_{i = 0}^{n/2 - j} \binom{n/2 - j}{i} t^i \\
	&= \frac{(-1)^{n/2 - j} n!}{2^{n/2} j! (n/2 - j)!} (1 + t)^{n/2 - j}.
\end{align*}

Now
\begin{align*}
	\lim_{N \to \infty} e^{tD/2}\, \tx^n|_{\tx=\sqrt N} &= \sum_{j = 0}^{n/2} e^{-\frac{t}{2}(n - 2j)} \lim_{N \to \infty} c_{n}^{(j)} \\
	&= \frac{n!}{2^{n/2} (n/2)!} \sum_{j=0}^{n/2} \binom{n/2}{j} \left(-(1 + t)e^{-t}\right)^j \\
	&= \frac{n!}{2^{n/2} (n/2)!} \left(1 - (1 + t)e^{-t}\right)^{n/2},
\end{align*}
as desired.
\end{proof}
%=============================================
%=============================================
%=============================================

\newpage

\appendix
\section{Solution to the limiting PDE in Section~\ref{sec:heat-poly}}\label{apd:pde}
Equation~\eqref{eqn:limiting-kernel} is a parabolic differential equation with a standard method of solution using the Fourier transforms and the characteristic functions. 

By separation of variable, let $u^{\infty,k}_t = g_1(t,x_1) g_2(t,x_2)\dots g_k(t,x_k)$. After expanding the partial derivatives and dividing both sides by $u^{\infty,k}_t$, we obtain 
\begin{align*}
	\dfrac{1}{g_1}\dfrac{\pt g_1}{\pt t} + \sum_{j=2}^k \dfrac{1}{g_j}\dfrac{\pt g_j}{\pt t} = \dfrac{1}{2}\left\{\left[\dfrac{1-e^{-t}}{g_1}\dfrac{\pt^2 g_1}{\pt x_1^2}+\dfrac{x_1}{g_1}\dfrac{\pt^2 g_1}{\pt x_1^2}+1\right] + \sum_{j=2}^k\left[\dfrac{1}{g_j}\dfrac{\pt^2 g_j}{\pt x_j^2}+\dfrac{x_j}{g_j}\dfrac{\pt^2 g_j}{\pt x_j^2}+1\right]\right\}
\end{align*}
This reduces to specifically two parabolic equations of one variable:
\begin{align}
	g_t = \dfrac{1}{2}((1-e^{-t})g_{xx}+xg_x+g) \label{eqn:parabolic-1}
\end{align}
and
\begin{align}
	g_t = \dfrac{1}{2}(g_{xx}+xg_x+g) \label{eqn:parabolic-2}
\end{align}
We solve \eqref{eqn:parabolic-1} and obtain the solution for \eqref{eqn:parabolic-2} from the same argument. Taking the Fourier transform of both sides of \eqref{eqn:parabolic-1}, where 
\begin{align*}
	\hat g(t,\xi) = \mathcal F(g(t,x)) =  \dfrac{1}{\sqrt{2\pi }} \int_\R g(t,x) e^{-2i\pi \xi x}\,dx.
\end{align*}
Using the identity $\mathcal F(g_{xx})=-\xi^2\hat g$ and $\mathcal F(xg(t,x)) = -\hat g-\xi\hat g_\xi$, we get
\begin{align}
	\hat g_t= -\dfrac{1}{2}\left[(1-e^{-t})\xi^2\hat g+\xi\hat g_\xi\right] \label{eqn:first-order-pde}
\end{align}
which is a first-order differential equation in $t$ and $\xi$. Introducing the characteristic curve $\xi(t)$ that satisfies $\dfrac{d}{dt}\xi (t)=\dfrac{1}{2}\xi (t)$, which gives $\xi(t) = \xi_0 e^{t/2}$.

Along the characteristic curve, the function $\hat g$ satisfies 
$$\hat g_t = \dfrac{d}{dt}g(t,\xi(t))-\hat g_\xi \xi_t =  \dfrac{d}{dt}g(t,\xi(t))-\dfrac{1}{2}\xi\hat g_\xi$$ 
so substituting back to \eqref{eqn:first-order-pde}, we have 
\begin{align*}
	\dfrac{d}{dt}\hat g(t,\xi(t)) = \dfrac{1}{2}\left[-(e^t-1)\xi_0^2\right]\hat g (t,\xi(t))
\end{align*}

Integrating along the characteristic curve from $0$ to $t$, we obtain  
\begin{align*}
	\hat g(t,\xi) = 
	\hat g(0,\xi_0)\exp\left\{-\dfrac{1}{2}\left[(e^{t}-t-1)\xi_0^2\right]\right\}
\end{align*}
Since $\xi(0)=e^{-t/2}\xi$, we obtain
\begin{align*}
	\hat g(t,\xi) = \hat g(0,\xi e^{t/2})\exp\left\{-\dfrac{1}{2}(1-e^{-t}-te^{-t})\xi^2\right\}
\end{align*}
The initial condition is $g(0,x)=\del(x)$, so $\hat g(0,\xi) = \mathcal F(g(0,x))=1$.

Taking the inverse Fourier transform, we have 
\begin{align*}
	g(t,x) &= \mathcal F^{-1}(g(t,\xi)) =  \dfrac{1}{\sqrt{2\pi}} \int_\R \hat g(t,\xi)e^{2i\pi\xi}d\xi\\
	&= \dfrac{1}{\sqrt{2\pi(1-e^{-t}-te^{-t})}}\exp\left\{-\dfrac{x^2}{2(1-e^{-t}-te^{-t})}\right\}
\end{align*}

Using the same argument, we obtain the following solution for equation \eqref{eqn:parabolic-2}
\begin{align*}
	g(t,x) = \dfrac{1}{\sqrt{2\pi(1-e^{-t})}}\exp\left\{-\dfrac{x^2}{2(1-e^{-t})}\right\}
\end{align*}

Therefore, the unique solution to Equation~\eqref{eqn:limiting-kernel} is
\begin{align*}
	u^{\infty,k}_t (x_1,x_2,\dots,x_k) &= c_{t,k} \exp\left(-\dfrac{x_1^2}{2(1-e^{-t}-te^{-t})}-\dfrac{x_2^2+x_3^2+\dots+x_k^2}{2(1-e^{-t})}\right)
\end{align*}
where $c_{t,k}=(2\pi t)^{-k/2}(1-e^{-t}-te^{-t})^{-1/2}(1-e^{-t})^{(1-k)/2}$.

\bibliographystyle{plain}
\bibliography{refs.bib}
\end{document}